%%%%%%%%%     Beginning of LaTeX - file     %%%%%%%%%%

\documentclass[11pt]{article}
\usepackage{latexsym,amsmath,amsthm,amssymb,graphicx,mathrsfs,amsfonts,yfonts,calrsfs,eufrak,bm,stackrel}
\input epsf
\usepackage{tikz}
\usepackage{relsize}
\usepackage{mathtools}
\DeclareFontEncoding{LS1}{}{}
\DeclareFontSubstitution{LS1}{stix}{m}{n}
\DeclareSymbolFont{symbols3}{LS1}{stixbb}{m}{n}
\DeclareMathSymbol{\bigslopedvee}{\mathbin}{symbols3}{"A7}

        {\qquad\hspace*{\fill}$\square$\end{trivlist}}

        {\qquad\hspace*{\fill}\end{trivlist}}

\newcounter{fig}

\newcounter{clai}
\newcounter{exa}
\newcounter{theorem}[section]
\renewcommand{\thetheorem}{\arabic{section}.\arabic{theorem}}
\newcounter{nona}[theorem]
\newcounter{nonanona}[theorem]

\renewcommand{\thenona}{\Alph{nona}}
\renewcommand{\thenonanona}{\alph{nonanona}}

\newenvironment{theorem}{\begin{trivlist}\item[]\refstepcounter{theorem}%
        {\bf\thetheorem\ Theorem}\par\nobreak\noindent\sl\ignorespaces}{%
        \ifvmode\smallskip\fi\end{trivlist}}

\newenvironment{noname}{\begin{trivlist}\item[]\refstepcounter{nona}%
        {\bf (\thenona)\ \ \ }\nobreak\noindent\sl\ignorespaces}{%
        \ifvmode\smallskip\fi\end{trivlist}}
\newenvironment{theoremplus}[1]{\begin{trivlist}\item[]%
        \refstepcounter{theorem}{\bf\thetheorem\ Theorem} {\rm (\,#1\,)}%
        \par\nobreak\noindent\sl\ignorespaces}{%
        \ifvmode\smallskip\fi\end{trivlist}}

\newenvironment{proposition}{\begin{trivlist}\item[]\refstepcounter{theorem}%
        {\bf\thetheorem\ Proposition}\par\nobreak\noindent\sl\ignorespaces}{%
        \ifvmode\smallskip\fi\end{trivlist}}
\newenvironment{conjecture}{\begin{trivlist}\item[]%
        \refstepcounter{theorem}{\bf\thetheorem\ Conjecture}\par%
        \nobreak\noindent\sl\ignorespaces}{%
        \ifvmode\smallskip\fi\end{trivlist}}
\newenvironment{conjectureplus}[1]{\begin{trivlist}\item[]%
        \refstepcounter{theorem}{\bf\thetheorem\ Conjecture} %
        {\rm(\,#1\,)}\par\nobreak\noindent\sl\ignorespaces}{%
        \ifvmode\smallskip\fi\end{trivlist}}

\newenvironment{observation}{\begin{trivlist}\item[]\refstepcounter{theorem}%
        {\bf\thetheorem\ Observation}\par\nobreak\noindent\ignorespaces}{%
        \ifvmode\smallskip\fi\end{trivlist}}

        {\endlist\unskip}

\newenvironment{claim}{\begin{trivlist}\item[]\refstepcounter{clai}%
        {\bf Claim \theclai}\mbox{ \ }\sl\ignorespaces}{%
        \ifvmode\smallskip\fi\end{trivlist}}

        {\endtrivlist}
\newcommand{\wideitem}[1]{\leavevmode\hangindent\mathindent\noindent%
        \hbox to \mathindent{#1\hfil}\ignorespaces}

\renewcommand{\emptyset}{\mathchar"001F}
\newcommand{\eop}{\rule{1.4ex}{1.4ex}}
\renewcommand{\H}{{\cal H}}
\newcommand{\A}{{\cal A}}
\newcommand{\B}{\mathcal{B}}
\newcommand{\C}{{\cal C}}

\renewcommand{\S}{{\cal S}}
\newcommand{\T}{{\cal T}}

% \let\labl=\label
% \renewcommand{\label}[1]{\labl{#1}\marginpar{\scriptsize{#1}}}
% \newcommand{\labl}[1]{\label{#1}}

%\newcommand{\sqG}{\overset{\rightsquigarrow}{G}}
%\newcommand{\sqGp}{\overset{\rightsquigarrow}{G'}}
%\newcommand{\sqH}{\overset{\rightsquigarrow}{H}}
%\newcommand{\sqC}{\overset{\rightsquigarrow}{C}}
%\newcommand{\sqGmod}{G^{\hspace{-8pt}{\mathrel{\raisebox{3pt}{\rightsquigarrow}}}}}

%\newcommand{\square}{\vcenter{\vbox{\hrule height.4pt
%\hbox{\vrule width.4pt height8pt \kern8pt \vrule width.4pt} \hrule
%height.4pt}}} \mathindent = 2\parindent

%Macros for spaces

\newcommand{\sms}{\vspace*{.2in}}

\title{\bf Cyclic Orderings of Paving Matroids}

\author{Sean McGuinness \\ Dept. of Mathematics\\ Thompson Rivers University\\
McGill Road, Kamloops BC\\ V2C5N3 Canada\\ email: smcguinness@tru.ca}

\date{}

%%%% Now the real text begins%%%%%%%%%%

\begin{document}

\maketitle

\begin{abstract}
A matroid $M$ of rank $r$ is {\it cyclically orderable} if there is a cyclic permutation of the elements of $M$ such that any $r$ consecutive elements form a basis in $M$.  An old conjecture of Kajitani,  Miyano, and Ueno states that a matroid $M$ is cyclically orderable if and only if for all $\emptyset \ne X \subseteq E(M),\ \frac {|X|}{r(X)} \le \frac {|E(M)|}{r(M)}.$   In this paper, we verify this conjecture for all paving matroids.

\bigskip\noindent
{\sl AMS Subject Classifications (2012)}\,: 05D99,05B35.
\end{abstract}

\section{Introduction}

A matroid $M$ of rank $r$ is {\bf cyclically orderable} if there is a a cyclic permutation of the elements of $M$ such that any $r$ consecutive elements is a base.

For a matroid $M$ and a subset $\emptyset \ne X\subseteq E(M),$ we define $\beta(X) := \frac {|X|}{r(X)},$ if $r(X) \ne 0$; otherwise, $\beta(X):= \infty.$  Let $\gamma(M) = \max_{\emptyset \ne X \subseteq E(M)}\beta(X).$

It turns out that the condition $\gamma(M) = \beta(E(M))$ is a necessary condition for a matroid $M$ to be cyclically orderable.  To see this, suppose $e_1e_2 \dots e_n$ is a cyclic ordering of a rank-$r$ matroid $M.$
Then for any nonempty subset $A \subseteq E(M),$ we have $r|A| = \sum_{i=1}^n |A \cap \{ e_i, e_{i+1}, \dots ,e_{i+r} \} | \le nr(A).$  The first equality follows from the fact that each element of $A$ appears in exactly $r$ sets $\{ e_i, e_{i+1}, \dots ,e_{i+r} \}$ and the second inequality follows from the fact that $|A \cap \{ e_i, e_{i+1}, \dots ,e_{i+r} \}| \le r(A).$
Consequently, $\beta(A) \le \beta (E(M))$ and hence $\gamma(M) = \beta(E(M)).$  
In light of this, the following conjecture of Kajitani, Miyano, and Ueno \cite{Kajetal} seems natural:

\begin{conjecture}
A matroid $M$ is cyclically orderable if and only if $\gamma(M) = \beta(E(M)).$ \label{con-Kaj}
\end{conjecture}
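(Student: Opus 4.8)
I would prove the equivalence by establishing the two implications separately; they are of quite different character. The forward implication — if $M$ is cyclically orderable then $\gamma(M)=\beta(E(M))$ — holds for \emph{every} matroid and follows from a counting argument. Put $n=|E(M)|$, $r=r(M)$, and fix a cyclic ordering $e_1,\dots,e_n$ (indices mod $n$). Each of the $n$ windows $B_i=\{e_i,e_{i+1},\dots,e_{i+r-1}\}$ is a base, and each element of $M$ lies in exactly $r$ of them; since $B_i\cap X$ is independent, $|B_i\cap X|=r(B_i\cap X)\le r(X)$, so for every $\emptyset\ne X\subseteq E(M)$ we get $r\,|X|=\sum_{i=1}^{n}|B_i\cap X|\le n\,r(X)$, i.e.\ $\beta(X)\le n/r=\beta(E(M))$. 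Hence $\gamma(M)=\beta(E(M))$; in particular a cyclically orderable matroid is loopless with $r\ge 1$.

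For the reverse implication, assume $\gamma(M)=\beta(E(M))$, so that $|X|/r(X)\le n/r$ for every $\emptyset\ne X\subseteq E(M)$; we may assume $M$ loopless and $r\ge 1$, and I would induct on $n$, the case $n=1$ being trivial. Call $\emptyset\ne X\subsetneq E(M)$ \emph{tight} if $|X|/r(X)=n/r$. Suppose first that a tight set $X$ exists. Then $X$ is a flat, since an element $e\in\mathrm{cl}(X)\setminus X$ would give $\beta(X\cup\{e\})=(|X|+1)/r(X)>n/r$, contradicting the density bound; in particular $M/X$ is loopless. Applying the density bound for $M$ to the subsets of $X$, and to the sets $Y\cup X$ with $Y\subseteq E(M)\setminus X$, shows that $M|X$ and $M/X$ again satisfy the hypothesis, and that $r(X)/|X|=(r-r(X))/(n-|X|)=r/n$. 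By induction each of $M|X$ and $M/X$ admits a cyclic ordering, and I would splice them: writing $|X|/n=p/q$ in lowest terms (so $q\mid n$ and $q\mid r$), the $q$-periodic $\{0,1\}$-pattern with $p$ ones per period, repeated $n/q$ times around a circle of length $n$, has the property that every arc of length $r$ meets exactly $r(X)$ ones. Place the cyclic ordering of $M|X$ (in order) at the positions marked $1$ and the cyclic ordering of $M/X$ (in order) at the remaining positions. Then any window of $r$ consecutive positions carries $r(X)$ marked elements that are consecutive in the $M|X$-ordering — a base of $M|X$ — together with $r-r(X)$ unmarked elements consecutive in the $M/X$-ordering — a base of $M/X$ — and their union is a base of $M$, as needed.

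The remaining case — no tight proper set, so $|X|/r(X)<n/r$ strictly for every $\emptyset\ne X\subsetneq E(M)$ — is where the conjecture is genuinely hard, and I expect it to be the \textbf{main obstacle}. Here no structural decomposition is available, and the natural plan is to start from an arbitrary cyclic arrangement of $E(M)$ and repair it: if some window $W$ is not a base it contains a circuit, and one exchanges an element of a locally over-loaded arc into $W$, using the \emph{strict} slack in the density inequalities to guarantee that such a corrective exchange exists and that a suitable potential (say a weighted count of non-base windows) strictly decreases. The crux is to show that this process never gets stuck for an arbitrary matroid — equivalently, to classify the locally-repaired-but-globally-bad arrangements and rule them out — and that is exactly where the general conjecture remains open. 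For the paving matroids treated in the remainder of this paper it becomes manageable: in a rank-$r$ paving matroid every set of fewer than $r$ elements is independent, so a bad window is precisely an $r$-element circuit, and the rigidity of the rank function always supplies the required exchange. (The coprime case $\gcd(n,r)=1$ is already known, through work of van den Heuvel and Thomass\'e, and uniform matroids are trivial; it is the irreducible case in full generality — hence Conjecture~\ref{con-Kaj} itself — that still awaits new ideas.)
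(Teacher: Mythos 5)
You are right that this is an open conjecture, not a theorem, and the paper does not prove it: the paper's contribution is Theorem~\ref{the-main}, which verifies Conjecture~\ref{con-Kaj} only for paving matroids, via Proposition~\ref{pro1} (peeling off a basis while preserving the density condition), induction, and the machinery of $S$-pairs and order-consistency (Theorem~\ref{the1}) to insert the removed basis as a block. So there is no ``paper proof'' of the conjecture itself to compare against.

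As for what you actually establish: the forward implication is correct and complete (the double-count $r|X|=\sum_i|B_i\cap X|\le n\,r(X)$ is exactly the standard argument). Your tight-set reduction is also correct and is not in the paper. In detail, a tight $X$ is indeed a flat; both $M|X$ and $M/X$ inherit $\gamma=\beta(E)$ with the same ratio $n/r$; and since $r(X)/r=|X|/n=p/q$ with $q\mid\gcd(n,r)$, your $q$-periodic marking puts exactly $r(X)$ marked positions in every length-$r$ arc, and because a window is an arc it picks up $r(X)$ consecutive elements of the $M|X$-ordering and $r-r(X)$ consecutive elements of the $M/X$-ordering, whose union is a base of $M$. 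This correctly reduces the conjecture to the case with no tight proper subset, and it strictly subsumes the van den Heuvel--Thomass\'e coprime case, since $\gcd(n,r)=1$ already forces every nonempty proper $X$ to be strictly slack. You are right to flag the remaining case, no tight proper subset with $\gcd(n,r)>1$, as the genuine obstacle: the ``repair a bad window by exchange with a decreasing potential'' plan has no known termination argument in general, and this is exactly where the conjecture remains open. In short, everything you claim to prove is correct, you correctly identify the gap, and the paper makes no attempt to close that gap either, it restricts to paving matroids precisely to avoid it.
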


Despite having been around for decades, the above conjecture is only known to be true for a few special classes of matroids. In \cite{Bon}, the conjecture was shown to be true for sparse paving matroids.  Perhaps the strongest result thus far can be found in \cite{VanTho} where it was shown that Conjecture \ref{con-Kaj} is true when $r(M)$ and $|E(M)|$ are relatively prime. 

\begin{theoremplus}{Van Den Heuvel and Thomasse}
Let $M$ be a matroid for which $\gamma(M) = \beta(E(M)).$  If $|E(M)|$ and $r(M)$ are relatively prime, then $M$ has a cyclic ordering.\label{the-VanTho}
\end{theoremplus}

 It follows from recent results in \cite{BerSch} on {\it split matroids}, a class which includes paving matroids, that the conjecture is true for paving matroids $M$ where $|E(M)| \le 2 r(M).$ Coupled with Theorem \ref{the-VanTho}, we can replace $2r(M)$ by $2r(M)+1$ in this bound since $|E(M)|$ and $r(M)$ are relatively prime when $|E(M)| = 2r(M) +1.$ 
In this paper, we verify Conjecture \ref{con-Kaj} for all paving matroids.

\begin{theorem}
Let $M$ be a paving matroid where $\gamma(M) = \beta(E(M)).$  Then $M$ is cyclically orderable.\label{the-main}
\end{theorem}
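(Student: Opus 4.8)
The plan is to translate the statement into a combinatorial one about sequences, and then to build the cyclic ordering from a near-decomposition of $E(M)$ into bases. Write $n=|E(M)|$ and $r=r(M)$. In a paving matroid every set of size at most $r-1$ is independent, so a flat of rank at most $r-2$ has size equal to its rank, any two distinct hyperplanes (rank-$(r-1)$ flats) meet in a set of size at most $r-2$, and an $r$-subset of $E(M)$ is a basis precisely when it lies in no hyperplane $H$ with $|H|\ge r$. A short calculation shows that the only subsets that can beat the density $\beta(E(M))=n/r$ are these large hyperplanes and their subsets, so the hypothesis $\gamma(M)=\beta(E(M))$ is equivalent to $|H|\le\frac{r-1}{r}\,n$ for every hyperplane $H$ with $|H|\ge r$. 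Hence it suffices to arrange $E(M)$ cyclically so that no $r$ consecutive elements lie in a common large hyperplane.

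For the construction I first dispose of the easy ranges: if $\gcd(n,r)=1$ we are done by Theorem \ref{the-VanTho}, and the case $n\le 2r+1$ follows from the split-matroid results quoted above, so assume $\gcd(n,r)\ge 2$ and $n\ge 2r+2$. By the matroid union theorem, the density hypothesis guarantees that $M$ has $q=\lfloor n/r\rfloor$ pairwise disjoint bases; since the remaining $s=n-qr<r$ elements automatically form an independent set in a paving matroid, we may write $E(M)=B_1\cup\dots\cup B_q\cup I$ with the $B_j$ disjoint bases and $|I|=s$. Lay $B_1,\dots,B_q,I$ out consecutively around a cycle. A window of $r$ consecutive elements is then a whole block $B_j$ (a basis, hence fine), or it straddles two consecutive blocks, or — when $s>0$ — possibly three blocks near $I$. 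If each block is ordered internally as $b_{j,1},\dots,b_{j,r}$, a window straddling two full blocks contains the last element $b_{j,r}$ of the earlier block and the first element $b_{j+1,1}$ of the later one; so it is enough to choose in each block a ``first'' and a ``last'' element so that $\{b_{j,r},b_{j+1,1}\}\not\subseteq H$ for every large hyperplane $H$ and every consecutive pair of blocks, together with an analogous local condition around the short block $I$.

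This reduces the theorem to the following selection problem: letting $\phi(e)$ denote the set of large hyperplanes containing $e$, choose for each block $B_j$ two distinct elements $u_j,v_j$ with $\phi(v_j)\cap\phi(u_{j+1})=\emptyset$ all the way around the cycle (and handle $I$, where in addition one must kill the windows that swallow $I$ entirely). The key structural input is the bound $|H\cap H'|\le r-2$: if a basis $B_j$ has $|B_j\setminus H|=1$, then for any other large hyperplane $H'$ one checks that $B_j$ cannot also satisfy $|B_j\setminus H'|=1$ unless $H\cap H'\subseteq B_j$, which can hold for at most one of the disjoint blocks; thus for each pair of large hyperplanes at most one block is simultaneously ``tight'', and this scarcity of tight blocks is what should let a Hall-type, greedy-around-the-cycle argument produce the required choices. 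I expect this final step — breaking up every large hyperplane simultaneously and consistently around the whole cycle, plus the bookkeeping for the leftover block $I$ — to be the main difficulty; the reformulation and the block decomposition are routine once the paving structure is unpacked.
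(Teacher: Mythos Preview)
Your reduction to a ``first/last element'' selection problem is too coarse: the sufficient condition you isolate cannot always be met. Take $M=PG(2,q)$ with $q\equiv 1\pmod 3$, say $q=4$. Here $r=3$, $n=q^2+q+1=21$, every hyperplane (line) has size $q+1=5\ge r$, and the density hypothesis holds since $5\le\tfrac{2}{3}\cdot 21$. With $\gcd(21,3)=3$ and $n=7r\ge 2r+2$, you are squarely in your main case, and $E(M)$ decomposes into seven disjoint bases with no leftover. But in a projective plane \emph{every} pair of distinct points lies on a common line, so for any choice of $v_j\in B_j$ and $u_{j+1}\in B_{j+1}$ one has $\phi(v_j)\cap\phi(u_{j+1})\ne\emptyset$. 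Your selection problem therefore has no solution here, even though $PG(2,4)$ is cyclically orderable. The actual requirement is only that each of the $r-1$ windows straddling a given block boundary miss every large hyperplane; this depends on the full internal orderings of both adjacent blocks, not merely on their endpoints, and your ``tight block'' scarcity observation and Hall-type heuristic do not touch this finer problem.

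The paper takes a quite different route that avoids this obstacle. Rather than laying out all blocks at once, it argues by induction on $|E(M)|$: one first shows (and this already takes work when $|E(M)|\ge 2r+2$) that some single basis $S$ can be removed so that $M'=M\backslash S$ is still paving with $\gamma(M')=\beta(E(M'))$ and full rank; by induction $M'$ has a cyclic ordering $e_1\cdots e_m$. The core of the proof is then to show that $S$ can be inserted, in some order, between some consecutive pair $e_i,e_{i+1}$ to give a cyclic ordering of $M$. Establishing this insertion step requires a separate combinatorial lemma about pairs of set systems on $S$ that encode the circuit obstructions on either side of a gap, and is where the genuine difficulty lies.
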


For concepts, terminology, and notation pertaining to matroids, we shall follow Oxley \cite{Oxl} when possible.  For a matroid $M$, $\C(M)$ will denote the set of all circuits of $M.$

For a finite set $A$ and integer $k \le |A|$, we let ${\binom Ak}$ denote the set of all $k$-subsets of $A.$  For a collection of subsets $\A$ and integer $k$ we let ${\binom k{\A}}$ denote the set of all sets in $\A$ having cardinality $k.$

For a set $A$ and elements $x_1, \dots ,x_k$ we will often write, for convenience, $A + x_1 + x_2 + \cdots + x_k$ (resp. $A - x_1 - x_2 - \cdots -x_k$) in place of $A \cup \{ x_1, \dots ,x_k \}$ (resp, $A \backslash \{ x_1, \dots ,x_k \}$).

For a positive integer $n$, we let $[n]$ denote the set $\{ 1, \dots ,n \}.$

\subsection{Idea behind the proof}

To prove the main theorem, we shall use induction on $|E(M)|$.  To do this, we shall first remove a basis $S$ from $M$ so that the resulting matroid $M'$ satisfies $\gamma(M') = \beta(E(M) - S).$  While generally such a basis $S$ may not exist, we will show that such bases exist when $|E(M)|\ge 2r(M) +2.$  Applying the inductive assumption, $M'$ is cyclically orderable, with a cyclic ordering say $e_1e_2 \cdots e_m.$  We will show that for some $i\in [m]$ and some ordering of $S$, say $s_1s_2 \cdots s_r$ (where $r= r(M)$), the ordering $e_1 \cdots e_i s_1s_2 \cdots s_r e_{i+1} \cdots e_m$ is a cyclic ordering of $M$.   To give a rough idea of how to prove this, we will illustrate the proof in the case where $r(M) =3.$

Suppose $S = \{ s_1, s_2, s_3 \}$ is a basis of $M$ where $\gamma (M\backslash S) = \beta(E(M) -S)$ and $r(M\backslash S) = 3.$  Assume that $M' = M\backslash S$ has a cyclic ordering $e_1e_2 \cdots e_m$.  Suppose we try to insert the elements of $S$, in some order, between $e_m$ and $e_1$, so as to achieve a cyclic ordering for $M.$ Assume this is not possible. Then for every permutation $\pi$ of $\{ 1,2,3 \}$, $e_1e_2 \cdots e_m s_{\pi(1)}s_{\pi(2)}s_{\pi(3)}$ is not a cyclic ordering of $M.$  Thus for all permutations $\pi$ of $\{ 1,2,3 \}$, at least one of $\{ e_{m-1}, e_m, s_{\pi(1)} \}, \ \{ e_m, s_{\pi(1)}, s_{\pi(2)} \}, \{ s_{\pi(2)}, s_{\pi(3)}, e_1 \},$ or $\{ s_{\pi(3)}, e_1, e_2 \}$ is a circuit.  As an exercise for the reader, one can now show that there exist distinct $i,j \in \{ 1,2,3 \}$ such $\{ s_i, e_{m-1}, e_m \}, \{ s_j, e_1, e_2\}, S - s_i + e_m,$ and $S-s_j + e_1$ are circuits.  We may assume that $i=1$ and $j=2$. If instead, one were to assume that one could not insert the elements of $S$ in some order between $e_1$ and $e_2$ so as to achieve a cyclic ordering of $M$, then as above, there exist distinct $i',j'\in \{ 1,2,3 \},$ such that $\{ s_{i'}, e_{m}, e_1 \}, \{ s_{j'}, e_2, e_3\}, S - s_{i'} + e_1,$ and $S-s_{j'} + e_2$ are circuits. If $i' =1,$ then $\{ s_1, e_{m-1}, e_m \}$ and $\{ s_1, e_m, e_1 \}$ are circuits.  The circuit elimination axiom (together with the fact that $M$ is a paving matroid) would then imply that 
$( \{ s_1, e_{m-1}, e_m \} \cup \{ s_1, e_m, e_1 \}) - s_1 = \{ e_{m-1}, e_m, e_1 \}$ is a circuit, contradicting our assumption that $e_1e_2 \cdots e_m$ is a cyclic ordering of $M'.$  Also, if $i' = 2$, then $\{ s_2, e_m, e_1 \}$ and $\{ s_2, e_1, e_2 \}$ are circuits and hence by the circuit elimination axiom, $\{ e_m, e_1, e_2 \}$ is a circuit, a contradiction.  Thus $i' \not\in \{ 1, 2 \}$ and hence $i'=3$ and $\{ e_m, e_1, s_3 \}$ and $\{ s_1, s_2, e_2 \}$ are circuits.  Given that $\{ s_2, e_1, e_2 \}$ is also a circuit, it follows that 
$\{ e_1, e_2 \} \subset \mathrm{cl}(\{ s_1, s_2 \} ).$  Now $j'\in \{ 1,2 \},$ and $\{ s_{i'}, e_2,e_3 \}$ is a circuit, implying that $e_3 \in \mathrm{cl}(\{ s_1, s_2 \}).$  
However, this is impossible since (by assumption) $\{ s_1, s_2, s_3 \}$ is a basis. Thus there must be some ordering of $S$ so that when the elements of $S$ are inserted (in this order) between $e_m$ and $e_1$ or between $e_1$ and $e_2$, the resulting ordering is a cyclic ordering for $M.$ 

\section{Removing a basis from a matroid}
%In this section, we show how one can find a basis $S$ for a paving matroid $M$ satisfying $\gamma(M) = \beta(E(M))$, $S$ being such that $\gamma(M\backslash S) = \beta(E(M) - S)$ and $r(M\backslash S) = r(M).$   
%The following is an elementary elementary observation which will used in a number of places.
Let $M$ be a paving matroid where $\gamma(M) = \beta(E(M)).$
As a first step in the proof of Theorem \ref{the-main}, we wish to find a basis $B$ of $M$ where $\gamma(M\backslash B) = \beta(E(M)-B).$  Unfortunately, there are matroids where there is no such basis, as for example, the Fano plane.  In this section, we will show that, despite this, such bases exist when $|E(M)| \ge 2r(M) +2.$

The following is an elementary observation which we will refer to in a number of places.
\begin{observation}
For a basis $B$ in a matroid $M$ and an element $x\in E(M)-B$, the set $B+x$ has a unique circuit which contains $x$. \label{obs1}
\end{observation}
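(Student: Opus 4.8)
The plan is to invoke only the defining properties of a basis together with the circuit elimination axiom; this is the standard ``fundamental circuit'' fact, so the argument should be short. First I would note that since $B$ is a basis of $M$, it is a maximal independent set, so $B+x$ is dependent and therefore contains at least one circuit. Next I would observe that \emph{every} circuit $C$ with $C\subseteq B+x$ must satisfy $x\in C$: if not, then $C\subseteq B$, contradicting that $B$ is independent. Together these two remarks give existence of a circuit in $B+x$ containing $x$.

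For uniqueness, suppose $C_1$ and $C_2$ are distinct circuits with $C_1,C_2\subseteq B+x$; by the previous paragraph $x\in C_1\cap C_2$. Applying the circuit elimination axiom to $C_1$, $C_2$ and the element $x$ yields a circuit $C_3\subseteq (C_1\cup C_2)-x$. But $(C_1\cup C_2)-x\subseteq B$, so $C_3\subseteq B$, once again contradicting the independence of $B$. Hence the circuit of $M$ contained in $B+x$ is unique, and it is the unique circuit containing $x$.

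I do not expect any genuine obstacle here: the only point requiring a moment's care is verifying that every circuit inside $B+x$ actually passes through $x$, and that is immediate from $B$ being independent. (Note that this argument uses nothing special about paving matroids; it holds for arbitrary $M$, as the statement claims.)
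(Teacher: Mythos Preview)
Your argument is correct and is exactly the standard proof of the fundamental-circuit fact. The paper itself gives no proof at all: it simply labels this an ``elementary observation'' and states it for later reference, so there is nothing further to compare.
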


We will need the following strengthening of Edmonds' matroid partition theorem \cite{Edm} given in \cite{Fanetal}:

\begin{theorem}
Let $M$ be a matroid where $\gamma(M) = k + \varepsilon,$ where $k \in \mathbb{N}$ and $0 \le \varepsilon <1.$  Then $E(M)$ can be partitioned into $k+1$ independent sets with one set of size at most $\varepsilon r(M).$
\label{the0}
\end{theorem}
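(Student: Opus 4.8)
The plan is to obtain the statement as a short consequence of the matroid union theorem, applied to $k$ copies of $M$ together with one copy of a truncation of $M$. Write $r:=r(M)$ and put $d:=\max_{X\subseteq E(M)}(|X|-k\,r(X))$, so that $d\ge 0$ (take $X=\emptyset$); this $d$ will be the size bound for the small part. The only real estimate needed is $d\le\varepsilon r$, and it is immediate: since $\gamma(M)$ is finite, $M$ is loopless, and for every nonempty $X$ we have $|X|=\beta(X)\,r(X)\le\gamma(M)\,r(X)=(k+\varepsilon)\,r(X)$, hence $|X|-k\,r(X)\le\varepsilon\,r(X)\le\varepsilon r$, while $X=\emptyset$ gives $0\le\varepsilon r$.

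Next I would let $M_{(d)}$ denote the truncation of $M$ to rank $d$, so that a set is independent in $M_{(d)}$ precisely when it is independent in $M$ and has at most $d$ elements, and let $N$ be the matroid union of $k$ copies of $M$ with one copy of $M_{(d)}$. By the definition of matroid union, a subset of $E(M)$ is independent in $N$ exactly when it is a union of $k$ independent sets of $M$ together with one further independent set of $M$ of size at most $d$. Thus it suffices to show that $E(M)$ itself is independent in $N$: writing $E(M)=I_1\cup\cdots\cup I_k\cup J$ accordingly and discarding overlaps to make the union disjoint (subsets of independent sets are independent), we obtain a partition of $E(M)$ into $k+1$ independent sets of $M$ with $|J|\le d\le\varepsilon r$. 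It is the use of the truncation $M_{(d)}$, rather than a free matroid of rank $d$, that guarantees the small part $J$ is genuinely independent in $M$.

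By the matroid union theorem, $E(M)$ is independent in $N$ if and only if $|Z|\le k\,r(Z)+\min\{d,r(Z)\}$ for every $Z\subseteq E(M)$, and this splits into two trivial cases. If $r(Z)\ge d$, the condition reads $|Z|\le k\,r(Z)+d$, which holds by the definition of $d$. If $r(Z)\le d$, it reads $|Z|\le(k+1)\,r(Z)$, which follows from $|Z|\le(k+\varepsilon)\,r(Z)$ together with $\varepsilon<1$ (the case $r(Z)=0$ forcing $Z=\emptyset$). That exhausts all cases and finishes the proof.

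I do not anticipate a real obstacle here: the essential point is the observation that the bound $\varepsilon r(M)$ in the statement is, by the trivial estimate in the first paragraph, just an upper bound for the deficiency $\max_{X}(|X|-k\,r(X))$ of the $k$-fold union of $M$; once this is recognized, the theorem reduces to a one-line application of matroid union with a truncated extra summand. If pressed to name a point requiring care, it is only the need to take the extra summand to be $M_{(d)}$ (so that $J$ lands in the independent sets of $M$) and the harmless case distinction between $r(Z)\le d$ and $r(Z)\ge d$.
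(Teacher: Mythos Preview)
Your argument is correct. The paper, however, does not supply a proof of this statement at all: it is quoted verbatim as a known result from Fan, Jiang, Li, West, Yang, and Zhi (the reference \cite{Fanetal}), introduced as ``the following strengthening of Edmonds' matroid partition theorem.'' So there is no in-paper proof to compare against; your write-up stands on its own as a clean derivation from the matroid union theorem, and the device of taking the extra summand to be the rank-$d$ truncation $M_{(d)}$ (rather than a free matroid) is exactly what is needed to ensure the small part is independent in $M$. The two case checks $r(Z)\ge d$ and $r(Z)\le d$ are handled correctly, and the preliminary bound $d\le\varepsilon r$ is immediate from $\beta(X)\le\gamma(M)$ as you say.
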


We are now in a position to prove the main result of this section.

\begin{proposition}
Let $M$ be a paving matroid where $\gamma(M) = \beta(E(M)),$ $|E(M)| \ge 2r(M) +2,$ and $r(M)\ge 3.$   Then there is a basis $B$ of $M$ where $\gamma(M\backslash B) = \beta(E(M)-B)$ and $r(M\backslash B) = r(M).$  \label{pro1}
\end{proposition}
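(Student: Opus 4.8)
The goal is to exhibit a basis $B$ of the paving matroid $M$ whose deletion leaves the density ratio unchanged, i.e. $\gamma(M\backslash B) = \beta(E(M)-B)$, while keeping the rank. Write $r = r(M)$, $m = |E(M)|$, and recall $\gamma(M) = \beta(E(M)) = m/r$ by hypothesis. After deleting a basis we have $|E(M)-B| = m-r$ and, since $m \ge 2r+2$, we still have $|E(M)-B| \ge r+2 > r$, so as long as $M\backslash B$ has full rank $r$ we get $\beta(E(M)-B) = (m-r)/r = m/r - 1 = \gamma(M) - 1$. So what must be shown is: there is a basis $B$ such that (i) $r(M\backslash B) = r$, and (ii) every nonempty $X \subseteq E(M)-B$ satisfies $\beta(X) \le (m-r)/r$. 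Condition (i) is automatic once $m - r \ge r$, i.e. $m \ge 2r$, provided we choose $B$ so that no cocircuit is killed — but more carefully, (i) will follow for free from the partition argument below. The real content is (ii).

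**Key steps.**
First I would apply Theorem \ref{the0} to $M$. Since $\gamma(M) = m/r$, write $m/r = k + \varepsilon$ with $k \in \mathbb{N}$, $0 \le \varepsilon < 1$; then $E(M)$ partitions into $k+1$ independent sets $I_0, I_1, \dots, I_k$ with $|I_0| \le \varepsilon r < r$. Because $m \ge 2r+2$ we have $k \ge 2$. Each $I_j$ with $j \ge 1$ can be extended to a basis; the plan is to pick $B$ to be (an extension of) one of the large parts, or rather to build $B$ so that $E(M)-B$ retains the partition structure witnessing the bound $(m-r)/r$. Concretely: the remaining $k$ independent sets (after removing one part and redistributing) should partition $E(M)-B$ into $k$ independent sets, one small, which by the easy direction of matroid partition / the defining inequality forces $\gamma(M\backslash B) \le (m-r)/r$. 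Here is where the paving hypothesis enters decisively: in a paving matroid of rank $r$, every subset $X$ with $|X| \le r-1$ is independent, so $\beta(X) = |X|/|X| = 1$ or is bounded by $r/(r-1)$ for the borderline sizes; the only subsets $X$ that can have large $\beta(X)$ are those of rank $r$ (rank-deficient sets of size $\ge r$ are exactly unions of hyperplanes / come from the "loose" structure). So to verify (ii) it suffices to check $\beta(X) \le (m-r)/r$ only for $X \subseteq E(M)-B$ with $r(X) = r$, i.e. $|X|/r \le (m-r)/r$, i.e. $|X| \le m-r$ — which is automatic since $|E(M)-B| = m-r$! The remaining cases are $r(X) = r-1$ with $X$ a subset of a hyperplane, where $\beta(X) = |X|/(r-1)$, and one needs $|X|/(r-1) \le (m-r)/r$. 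Since $m \ge 2r+2$, the right side is $\ge (r+2)/r$, and the hyperplanes of a paving matroid that are "large" (the dependent hyperplanes, i.e. the nonspanning circuits' closures) are what could cause trouble — so the choice of $B$ must be made to hit every large dependent hyperplane, removing enough elements from it.

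**The main obstacle.**
The crux, then, is this: I must choose the basis $B$ so that for every hyperplane $H$ of $M$ that is "too dense" — meaning $|H \cap (E(M)-B)|/(r-1) > (m-r)/r$, equivalently $|H \cap B| < |H| - (r-1)(m-r)/r$ — we have enough room to delete the offending elements. Using $|H| \le m$ and $m \ge 2r+2$, one computes the threshold and shows that each dangerous hyperplane must contain at least, say, two elements of $B$ (or more precisely a number growing with how dense $H$ is), and then the task is to show a single basis $B$ can simultaneously meet all dangerous hyperplanes in the required number of elements. This is a transversal/Hall-type selection problem and is where I expect to spend the most effort. The tool to close it is again Theorem \ref{the0} applied with care, or a direct greedy augmentation argument: starting from a maximal independent set inside $E(M) \setminus (\text{union of dangerous hyperplanes})$ when that union is not all of $E(M)$, and otherwise using the density bound $\gamma(M) = m/r$ itself to bound how many dangerous hyperplanes there can be and how they overlap. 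In short: the partition theorem hands us the right ratio bound almost for free on sets of rank $r$; the paving structure reduces everything else to controlling dense hyperplanes; and the genuine work is a counting/selection argument showing one basis can be spread across all of them, which is exactly where the hypothesis $|E(M)| \ge 2r(M)+2$ is used.
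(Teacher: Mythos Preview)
Your reduction is sound and matches the paper: after invoking Theorem~\ref{the0}, the only sets $X\subseteq E(M)-B$ that can violate $\beta(X)\le (m-r)/r$ are those of rank $r-1$, and the task reduces to choosing $B$ so that no hyperplane of $M$ remains too dense after deletion. The gap is your ``Hall-type selection'' step, which you acknowledge as the crux but do not carry out. Your estimate that each dangerous hyperplane needs ``at least, say, two elements of $B$'' is too optimistic: from $|H|\le (r-1)m/r$ and the target $|H\setminus B|\le (r-1)(m-r)/r$ one gets that an extremal hyperplane requires $|H\cap B|\ge r-1$, i.e.\ nearly all of $B$. With only $r$ elements to spend, a transversal argument covering several such hyperplanes simultaneously is not at all clear, and nothing in your sketch bounds their number or forces them to overlap in rank $r-2$.

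The paper avoids this selection problem entirely. Writing $m=kr+\ell$ and partitioning $E(M)$ into bases $F_1,\dots,F_k$ and a small independent set $F_{k+1}$, it does not try to hit all dense hyperplanes with one basis. Instead it manufactures $r-1$ explicit candidate bases $B_1,\dots,B_{r-1}$, each of the form $A_i\cup F_{k+1}$ with $A_i\subseteq F_k-x_i$, and argues by contradiction. If every $B_i$ fails, the obstructing set $X_i$ is forced (by paving and a count against the partition) to have rank $r-1$ and exact size $(k-1)(r-1)+\ell$; then, using Observation~\ref{obs1} and circuit elimination, one shows $\mathrm{cl}(X_i)=\mathrm{cl}(X_j)$ for all $i,j$. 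This common hyperplane then contains $F_k-x_r$ and $F_{k+1}$, giving $\beta(X)=k+\ell/(r-1)>\gamma(M)$, a contradiction. The point is that the paper proves there is essentially \emph{one} dangerous hyperplane rather than trying to hit many; your plan would need exactly this uniqueness to make the selection go through, and that is the missing idea.
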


\begin{proof}
Let $\gamma(M) = k + \frac {\ell}{r(M)}$ where $0 \le \ell < r(M)$ and $k\ge 2.$  Then $|E(M)| = kr(M) + \ell$ and it follows by Theorem \ref{the0} that one can partition $E(M)$ into $k$ independent sets $F_1, \dots ,F_k$ and one independent set $F_{k+1}$ having at most $\ell$ elements.  Since for all $i\in [k],\ |F_i| \le r(M)$ and $|F_{k+1}| \le \ell$ it follows that $kr(M) + \ell = |E(M)| = \sum_{i=1}^k |F_i| + |F_{k+1}| \le kr(M) + \ell.$  Thus equality must hold in the inequality and as such, for all $i\in [k],\ |F_i| = r(M)$ and $|F_{k+1}| = \ell.$  Thus $F_1, \dots ,F_k$ are bases in $M$.
Let $r = r(M).$ If $\ell = 0,$ then $|E(M)| = kr \ge 3r.$  In this case, we can take $B=F_k$ since for $M' = M\backslash F_k,$ it is seen that $\gamma(M') = k-1 = \beta(E(M')).$  Thus we may assume that $\ell >0.$ 

Let $F_k = \{ x_1, x_2, \dots ,x_r \}.$  Suppose there exist distinct $i,j \in [r]$ for which $r((F_k -x_i) \cup F_{k+1}) = r((F_k -x_j)\cup F_{k+1})=  r -1.$  Let $x\in F_{k+1}.$  Then $x+ (F_k -x_i)$ and $x + (F_k -x_j)$ are (distinct) circuits, contradicting Observation \ref{obs1}. Thus there is at most one $i\in [r]$ for which $r((F_k - x_i )\cup F_{k+1}) = r-1.$   As such, we may assume that for $i=1, \dots ,r-1$, $r((F_k - x_i) \cup F_{k+1}) = r.$  Thus for $i=1, \dots ,r-1$, there is a subset $A_i \subseteq F_k - x_i$ such that $B_i = A_i \cup F_{k+1}$ is a basis for $M.$

We shall show that the bases $B_i, \ i = 1, \dots ,r-1$ can be chosen so that for some $i\in[r-1]$, $B=B_i$ is a basis satisfying the proposition.  Suppose that none of the bases $B_i$ satisfy the proposition.  Then for all $i\in [r-1],$ there is a subset $X_i \subseteq E(M) - B_i$ for which $\beta(X_i) > \beta(E(M)-B_i).$  Since $k>1$, we have that $F_1 \subseteq E(M\backslash B_i)$ and hence $r(M\backslash B_i) = r.$  Thus we have
$\beta(E(M)-B_i) = k-1 + \frac {\ell}r.$  If $r(X_i) <r-1,$ then $X_i$ is independent and hence $\beta(X_i) =1 \le \beta(E(M)-B_i).$  Thus $r(X_i) \ge r-1$ and seeing as $\beta(X_i) > \beta(E(M) - B_i),$ we have
$r(X_i) \le r-1.$  Consequently, $r(X_i) = r-1$ and $\beta(X_i) = \frac {|X_i|}{r-1} > k-1 + \frac {\ell}r$.
Since $r(X_i) = r-1,$ it follows that for $j =1, \dots ,k-1$, $|X_i\cap F_j| \le r-1.$  Consequently, $|X_i| \le (k-1)(r-1) + \ell$.  If $|X_i| < (k-1)(r-1) + \ell,$ then $\beta(X_i) \le k-1 + \frac {\ell -1}{r-1}$, implying that $\beta(X_i) \le k-1 + \frac {\ell}r$, contradicting our assumptions. Thus it follows that $|X_i| = (k-1)(r-1) + \ell$ and for all $i\in [r-1]$ and for all $j\in [k-1],$ $|X_i\cap F_j| = r-1$, and $F_k - A_i \subset X_i$.
 Thus for all $i\in [r-1]$ and for all $j\in [k-1],$ $X_{ij} = X_i \cap F_j$ spans $X_i.$  Since all circuits in $M$ have size at least $r$, it follows that for all $j\in [k-1],$ and for all $x\in X_i - X_{ij},$  $X_{ij} +x$ is a circuit.

Suppose $k\ge 3.$ Let $i,j \in [r-1]$ where $i$ and $j$ are distinct (noting that such $i,j$ exists since $r\ge 3$).  Since $r\ge 3,$ there exists $x \in X_{i2} \cap X_{j2}.$ We have that $x + X_{i1}$ and $x + X_{j1}$ are circuits.  It follows by Observation \ref{obs1} that $X_{i1} = X_{j1}$ and thus $\mathrm{cl}(X_i) = \mathrm{cl}(X_j).$  Let $X = \mathrm{cl}(X_i).$ 
Since $F_k - A_i \subset X_i$, $F_k - A_j \subset X_j,$ $x_i \in F_k - A_i$ and $x_j \in F_k - A_j$, we have $\{ x_i, x_j \} \subset X$.   Since this applies to all $j\in[r-1]-i$, it follows that $F_k - x_r \subset X.$  If $r((F_k - x_r) \cup F_{k+1}) =r,$ then 
one could let $x_r$ play the role of $x_{r-1}$, and it would follow that $x_r \in X.$  This would imply that $F_k \subset X,$ an impossibility (since $r(X) = r-1$).
Thus $r((F_k -x_r) \cup F_{k+1}) = r-1.$  Given that $F_k -x_r \subset X$, we have $F_{k+1} \subseteq \mathrm{cl}(F_k - x_r) \subset X.$  Now it is seen that $\beta(X) = \frac {|X|}{r(X)} = \frac {k(r-1) + \ell}{r-1} = k + \frac {\ell}{r-1} > \gamma (M),$ a contradiction.

From the above, we have $k=2.$  Since $|E(M)| \ge 2r(M) +2,$ we have $\ell \ge 2.$ Let $i\in [r-1].$  

\begin{claim} For all $j\in [r-1] -i,$ one can choose $B_j$ so that $X_{j1} = X_{i1}.$ \label{claim1} \end{claim}
\begin{proof}
Let $j \in [r-1]-i$.  Suppose there exists $x \in (F_2 - A_i) \cap (F_2- A_j).$  Then $x \in X_i \cap X_j$ (since $F_2 - A_i \subset X_i$ and $F_2-A_j \subset X_j$) and, given that $r(X_i) = r(X_j) = r-1 = |X_{i1}| = |X_{j1}|$, it follows that $x + X_{i1}$ and $x+ X_{j1}$ are circuits.   It now follows by Observation \ref{obs1} that  $X_{i1}= X_{j1}.$   %One can now argue as in the case where $k\ge 3,$ showing that for $X = \mathrm{cl}(X_i),$ $\beta(X) = k + \frac {\ell}{r-1} > \gamma (M)$ (yielding a contradiction).

Suppose instead that $(F_2 - A_i) \cap (F_2 - A_j) = \emptyset.$  That is, $F_2- A_i \subseteq A_j$ (and $F_2- A_j \subseteq A_i$).  Since $\ell \ge 2,$ there exists $x_s \in F_2 - A_j -x_j$. Now $x_s + B_j$ contains a (unique) circuit $C$ where $x_s \in C.$  We claim that $C\cap (F_2-A_i) \ne \emptyset.$  To see this, we observe that $|A_j - (F_2 - A_i)| = r - 2\ell.$  Thus 
\begin{align*}
|C \cap (F_2-A_i)| &= |C-x_s| - |C \cap ((A_j - (F_2 - A_i)) \cup F_3))| \\ 
&\ge |C| - 1 - ((r-2\ell)+\ell) = |C| -1 - r + \ell \ge \ell -1 \ge 1.\end{align*}   

Let $x_t \in C \cap (F_2 - A_i).$ 
%since $|C \cap (F_2-A_j)| \ge |C| - (r-\ell) -1 \ge r - (r-\ell) -1 \ge \ell -1 \ge 1$  
Observing that $B_j - x_t + x_s$ is also a basis, let $A_j' = A_j - x_t+ x_s$
and $B_j' = B_j - x_t + x_s.$  Then $B_j' = A_j' + F_3$ and moreover, $x_t \in (F_2 - A_i) \cap (F_2 - A_j').$  Now defining $X_j$ as before, using $B_j'$ in place of $B_j$, one obtains that $X_{i1} = X_{j1},$ as in the previous case. \end{proof}
%Comment.  Note that if $\ell = r-1$, then $|A_i| = |A_j| =1$ and hence $|F_2- A_i| = |F_2-A_j| = r-1.$. Consequently, $(F_2-A_i) \cap (F_2-A_j) \ne \emptyset$ since $r \ge 3.$

%
 %and also, $x_i \in (F_2 - A_t) \cap (F_2 - A_i).$  
%
%Thus by the above we have $\mathrm{cl}(X_i) = \mathrm{cl}(X_t) = \mathrm{cl}(X_j).$
%It follows that for all $j\in [r-1] -1,$ $\mathrm{cl}(X_1) = \mathrm{cl}(X_j).$  Let $X = \mathrm{cl}(X_1).$  We can now adopt the same arguments as in the case $k\ge 3$ to show that $\beta(X)  = k + \frac {\ell}{r-1} > \gamma (M)$, yielding a contradiction.  
By the above claim, we may assume that for all $j\in [r-1] -i,$ the base $B_j$ can be chosen so that $X_{i1} = X_{j1}.$  Letting $X = \mathrm{cl}(X_{i})$ and following similar reasoning  as before,  we have that $(F_2 - x_r) \cup F_3 \subset X.$
Thus $\beta(X) = \frac {|X|}{r(X)} = \frac {2(r-1) + \ell}{r-1} = 2 + \frac {\ell}{r-1} > \gamma (M),$ a contradiction.  It follows that for some $i\in [r-1],$ the proposition holds for $B=B_i.$
\end{proof}

%Note that the above proposition is false for Fano plane and hence the assumption that $|E(M)| \ge 2r(M) +2$ is necessary.

\section{$\mathbf{S}$-Pairs}
In the second part of the proof of Theorem \ref{the-main}, we will need to establish the existence of certain circuits.  More specifically, suppose $S$ is a basis as described in Proposition \ref{pro1} where we assume that $S= \{ s_1, \dots ,s_r \}.$
Suppose $e_1 e_2 \dots e_m$ is cyclic ordering for $M' = M \backslash S$ and our aim is to extend this ordering to a cyclic ordering for $M$ by inserting the elements of $S$, in some order, between $e_m$ and $e_1.$
Assuming this is not possible, it turns out (as in the case where $r(M) =3$) that there must be certain circuits.  For example, there are subsets $\{ B_1, B_2 \} \in {\binom S{r-2}}$ such that for all $s_i \in B_1, \ \{ s_i, e_{m-r+2}, \dots ,e_m \} \in \C(M)$ and
for all $s_i \in B_2, \{ s_i, e_1, \dots ,e_{r-1} \} \in \C(M).$  The results in this section and its successor,  lay the ground work to prove the existence of such circuits.
 
Let $S$ be a finite, nonempty set.  For $i=1,2,$ let $\S_i \subseteq  2^S.$  We call the pair $(\S_1, \S_2)$ an $\mathbf{S}$-{\bf pair}  if it has the following properties.

\begin{itemize}
\item[({\bf S1})] For $i=1,2,$  if $A,B \in \S_i$ where  $|A| = |B|+1$ and $B\subset A,$ then ${\binom A{|B|}} \subseteq \S_i.$   
\item[({\bf S2})] For $i=1,2$, if $A,B \in \S_i$ where $|A| = |B|$ and $|A\cap B| = |A|-1,$ then $A \cup B \in \S_i.$
\item[({\bf S3})] For $i=1,2,$ ${\binom S1} \not\subseteq \S_i$ and $S \not\in \S_i.$
\item[({\bf S4})] For $k=1, \dots ,|S|-1$, if ${\binom {S-x}k} \subseteq \S_1$ for some $x \in S,$ then ${\binom {S-x}{|S|-k}} \not\subseteq \S_2.$
\end{itemize}

In the next section, we shall need the following observations for an $S$-pair $(\S_1, \S_2)$ where $|S|=r.$

\begin{observation}
Let $A \subseteq S$ where $\alpha = |A|.$  Suppose that for some $i\in \{1,2 \}$ and some $j\in [\alpha],$ ${\binom Aj } \subseteq \S_i.$  Then for $k = j, \dots ,\alpha,$ ${\binom Ak} \subseteq \S_i.$\label{obs2}
\end{observation}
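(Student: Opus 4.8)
The plan is to argue by induction on $k$, increasing from $j$ up to $\alpha$. The base case $k = j$ is exactly the hypothesis ${\binom Aj} \subseteq \S_i$, and if $j = \alpha$ there is nothing more to prove, so I would assume $j \le k < \alpha$ and that ${\binom Ak} \subseteq \S_i$ has already been established, aiming to deduce ${\binom A{k+1}} \subseteq \S_i$.

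For the inductive step I would fix an arbitrary $C \in {\binom A{k+1}}$ and show $C \in \S_i$. Since $k+1 \ge j+1 \ge 2$, the set $C$ has at least two elements, so I can pick distinct $x, y \in C$. Then $C - x$ and $C - y$ are two distinct $k$-subsets of $A$, hence both lie in $\S_i$ by the inductive hypothesis. Moreover $|C-x| = |C-y| = k \ge 1$ and $(C-x)\cap(C-y) = C - x - y$ has size $k-1 = |C-x| - 1$, so property ({\bf S2}) applies to the pair $C-x,\,C-y$ and gives $(C-x)\cup(C-y) = C \in \S_i$. As $C$ was an arbitrary $(k+1)$-subset of $A$, this shows ${\binom A{k+1}} \subseteq \S_i$ and closes the induction.

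The only points requiring any care are the cardinality bookkeeping needed to invoke ({\bf S2}) — namely that the two chosen $k$-subsets have equal size and meet in exactly one fewer element — and the trivial boundary case $j = \alpha$; neither is a genuine obstacle. Note that only property ({\bf S2}) is used; ({\bf S1}), ({\bf S3}), and ({\bf S4}) play no role in this observation.
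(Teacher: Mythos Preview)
Your proof is correct and follows essentially the same approach as the paper: both argue by induction on $k$, and for the inductive step both take an arbitrary $(k+1)$-subset, remove two distinct elements to obtain two $k$-subsets already known to lie in $\S_i$, and apply ({\bf S2}) to their union.
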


\begin{proof}
We may assume that $j < \alpha.$  Suppose that for some $k \in \{ j, \dots ,\alpha-1\},$ ${\binom Ak} \subseteq \S_i.$  Let $B \in {\binom A{k+1}}.$  Let $\{ b_1, b_2 \} \subseteq B$ and for $s=1,2,$  
let $B_s = B-b_s$.  By assumption, for $s =1,2$, $B_s \in \S_i.$ It now follows by ({\bf S2}) that $B = B_1 \cup B_2 \in \S_i.$  Consequently, we have that ${\binom A{k+1}} \subseteq \S_i.$
Arguing inductively, we see that for $k = j, \dots ,\alpha,$ ${\binom Ak} \subseteq \S_i.$  
\end{proof}

\begin{observation}
Let $A\in \S_i$ where $\alpha = |A|.$  Suppose that for some $j \in [\alpha -1]$ and $x\in A,$ we have ${\binom {A-x}j} \subseteq \S_i.$  Then ${\binom Aj} \subseteq \S_i.$\label{obs3}
\end{observation}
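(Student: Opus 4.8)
The plan is to prove this by downward induction, peeling elements off $A$ one at a time, using property (\textbf{S2}) together with the hypothesis $A \in \S_i$ as the source of "seed" sets of size $|A|$. The hypothesis gives us ${\binom{A-x}{j}} \subseteq \S_i$ for one particular $x$; we want to push this up to all $j$-subsets of $A$ itself, i.e.\ we must produce the $j$-subsets of $A$ that contain $x$.

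First I would handle the trivial edge case: if $j = \alpha - 1$... actually let me think about the right induction. Set $\alpha = |A|$. We already know every $j$-subset of $A-x$ lies in $\S_i$; the only $j$-subsets of $A$ not yet accounted for are those containing $x$. Fix such a set $B$ with $x \in B \subseteq A$, $|B| = j$. The idea is to first establish that enough sets of size $j+1$ lie in $\S_i$, then apply (\textbf{S1}) to descend. Concretely, I would first show ${\binom{A}{\alpha-1}} \subseteq \S_i$: since $A \in \S_i$ has size $\alpha$, for any $y \in A$ the set $A - y$ has size $\alpha - 1 = |A| - 1$ and sits inside $A$; but to invoke (\textbf{S1}) I need another set of size $\alpha - 1$ in $\S_i$ sitting inside $A$ — and ${\binom{A-x}{\alpha-1}}$, if nonempty (i.e.\ if $\alpha - 1 \le \alpha - 1$, always, provided $j \le \alpha - 1$ so that such subsets of $A - x$ are among those guaranteed only when $j = \alpha-1$)...

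So the cleaner route: apply Observation \ref{obs2} to $A - x$ with its index $j$ to conclude ${\binom{A-x}{k}} \subseteq \S_i$ for all $k = j, \dots, \alpha - 1$. In particular ${\binom{A-x}{\alpha-1}} = \{A - x\} \subseteq \S_i$, so $A - x \in \S_i$. Now $A, A-x \in \S_i$, $|A| = |(A-x)| + 1$, $A - x \subset A$, so (\textbf{S1}) yields ${\binom{A}{\alpha-1}} \subseteq \S_i$. Next run a downward induction on $k$ from $\alpha - 1$ down to $j$: suppose ${\binom{A}{k+1}} \subseteq \S_i$ with $k+1 > j$; then for any $B \in {\binom{A}{k}}$ pick any $A' \in {\binom{A}{k+1}}$ with $B \subset A'$ (possible since $k + 1 \le \alpha$), and since $A' \in \S_i$, $B \subset A'$, $|A'| = |B| + 1$, (\textbf{S1}) gives ${\binom{A'}{k}} \subseteq \S_i$, in particular $B \in \S_i$. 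Hence ${\binom{A}{k}} \subseteq \S_i$, and the induction drives $k$ down to $j$, giving ${\binom{A}{j}} \subseteq \S_i$ as required.

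The main thing to get right is the bookkeeping of indices — making sure that Observation \ref{obs2} is applicable to $A-x$ (its hypothesis ${\binom{A-x}{j}} \subseteq \S_i$ is exactly what we are given, and $j \in [\alpha - 1]$ means $j \le \alpha - 1 = |A-x|$, so $j$ is a legitimate index for $A - x$), and that the descent via (\textbf{S1}) never calls for a $(k+1)$-subset when $k + 1$ would exceed $\alpha$. No step is genuinely hard; the only subtlety is that (\textbf{S1}) is a "descend by one level" rule and (\textbf{S2}) a "same-level merge" rule, so the argument must route through a set of size $\alpha$ already known to be in $\S_i$ (namely $A$) to seed the top level before descending — one cannot get $j$-subsets of $A$ containing $x$ purely from $\S_i$-membership of subsets of $A - x$.
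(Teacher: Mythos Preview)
Your descent step misreads property (\textbf{S1}). You write: ``since $A' \in \S_i$, $B \subset A'$, $|A'| = |B| + 1$, (\textbf{S1}) gives ${\binom{A'}{k}} \subseteq \S_i$.'' But (\textbf{S1}) requires \emph{two} sets in $\S_i$: you need both $A' \in \S_i$ \emph{and} some $k$-subset $B'' \subset A'$ already in $\S_i$ before you can conclude ${\binom{A'}{k}} \subseteq \S_i$. Knowing only $A' \in \S_i$ is not enough --- indeed, if it were, then from $A \in \S_i$ alone you could cascade down and get every subset of $A$ into $\S_i$, making the hypothesis ${\binom{A-x}{j}} \subseteq \S_i$ superfluous.

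The fix is exactly what the paper does (and what your step~1 sets up but never uses at intermediate levels): at level $k$ with $j \le k \le \alpha - 1$, when $B \in {\binom Ak}$ contains $x$, choose $y \in A - B$, set $A' = B + y$ and $B'' = B - x + y$. Then $B'' \in {\binom{A-x}{k}} \subseteq \S_i$ (this is where your Observation~\ref{obs2} conclusion ${\binom{A-x}{k}} \subseteq \S_i$ for all $k \ge j$ is actually needed), and $A' \in {\binom{A}{k+1}} \subseteq \S_i$ by the inductive hypothesis. Now (\textbf{S1}) applies to the pair $(A', B'')$ and gives $B \in {\binom{A'}{k}} \subseteq \S_i$. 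With this correction your argument becomes essentially the paper's proof.
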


\begin{proof}
Suppose first that $j= \alpha-1.$ Then $A' =A -x \in \S_i.$ It follows by ({\bf S1}) that ${\binom A{\alpha -1}} \subseteq \S_i.$  Assume that $j < \alpha -1$ and the assertion holds for $ j +1;$ that is, if ${\binom {A-x}{j+1}} \subseteq \S_i$, then ${\binom A{j+1}} \subseteq \S_i.$
Suppose ${\binom {A-x}j} \subseteq \S_i.$  Then by Observation \ref{obs2}, ${\binom {A-x}{j+1}} \subseteq \S_i.$ Thus by assumption, ${\binom A{j+1}} \subseteq \S_i.$ Let $B \in {\binom Aj}$, where $x \in B.$  Let $y\in A-B$ and let $B' = B-x + y.$  Since $B' \in {\binom {A-x}j}$, it follows that $B' \in \S_1.$  However, we also have that $B+y \in \S_i.$  Thus it follows by ({\bf S1}) that $B \in \S_i.$   We now see that ${\binom Aj} \subseteq \S_i.$  The assertion now follows by induction.
\end{proof}

\begin{observation}
Let $A\subseteq S.$  Suppose for some $x \in A,$ $i\in \{ 1,2 \},$ and $j\ge 2,$ we have that $\{ B \in {\binom Aj} \ \big| \ x\in B \} \subseteq \S_i.$  Then ${\binom Aj} \subseteq \S_i$ and $A\in \S_i.$ \label{obs4}
\end{observation}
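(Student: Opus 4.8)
The hypothesis gives us every $j$-subset of $A$ that contains the fixed element $x$; the goal is to promote this to \emph{all} $j$-subsets of $A$, and then to $A$ itself. The natural strategy is to first get one $j$-subset of $A$ \emph{not} containing $x$ into $\S_i$ directly from (S2), and then invoke the already-established Observations to sweep up the rest. First I would pick any $y \in A - x$ (this requires $|A| \ge j+1 \ge 3$, which holds since $j \ge 2$ and $x,y$ together with $j-1$ more elements must fit — more precisely we need $|A| > j$; if $|A| = j$ the single set $A$ itself contains $x$ and there is nothing to prove, so assume $|A| > j$). Choose two distinct $j$-subsets $B_1, B_2$ of $A$, both containing $x$, with $|B_1 \cap B_2| = j-1$, chosen so that $B_1 \cup B_2 = B_1 + y$ for some $y$ and moreover $x \notin$ (the symmetric difference) — concretely, let $C$ be a $(j-1)$-subset of $A - x$, let $z \in A - x - C$, $y \in A - x - C - z$ (possible once $|A| \ge j+2$; the boundary case $|A| = j+1$ I would handle separately, see below), and set $B_1 = C + x$? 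No — I need the union to drop $x$.

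Let me restructure. The clean way: take $C \in \binom{A-x}{j-1}$ and two distinct elements $y_1, y_2 \in A - x - C$ (needs $|A| \ge j+1$; if $|A-x-C|$ has only one element, i.e. $|A| = j+1$, treat separately). Then $C + x + y_1$ and $C + x + y_2$ are both in $\S_i$ by hypothesis, they have the same size $j+1 \ge 3$, and their intersection $C + x$ has size $j$, so by (S2) their union $C + x + y_1 + y_2 \in \S_i$. Now $C + y_1 + y_2$ is a $j$-subset of $C + x + y_1 + y_2$, and by (S1) applied to this pair, $\binom{C+x+y_1+y_2}{j} \subseteq \S_i$; in particular the $j$-set $C + y_1 + y_2$, which does \emph{not} contain $x$, lies in $\S_i$. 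So now I have at least one $x$-free $j$-set in $\S_i$ together with all $x$-containing $j$-sets of $A$; I claim $\binom{A}{j} \subseteq \S_i$ follows. Indeed, apply Observation~\ref{obs2} to $C+x+y_1+y_2$ to get $C+x+y_1+y_2 \in \S_i$, hence (an $x$-containing $(j+1)$-superset lies in $\S_i$, plenty of those from the hypothesis plus Observation~\ref{obs2}), and then Observation~\ref{obs3} with this $A' = $ a suitable $(j+1)$-set containing $x$ lets me conclude $\binom{A'}{j} \subseteq \S_i$; iterating upward in the size of such sets until it reaches all of $A$. Cleaner: once one $x$-free $j$-set $D \in \S_i$ is known, for \emph{any} target $j$-set $B \in \binom{A}{j}$ I walk from $D$ to $B$ by single-element swaps, at each step using (S1): if $B', B'' $ differ in one element and $B' \cup \{\text{new}\} =: B'+e \supseteq B''$ lies in $\S_i$ (which it does whenever $B'+e$ contains $x$, available by hypothesis, or is reachable by Observation~\ref{obs2}), then $B'' \in \S_i$. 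The point is that from the all-$x$-containing-$j$-sets together with Observation~\ref{obs2} we have every $(j+1)$-subset of $A$ containing $x$ in $\S_i$, and every $j$-set $B$ either contains $x$ (done) or satisfies $B + x \supseteq B$ with $B+x$ a $(j+1)$-set containing $x$, so by (S1) $B \in \S_i$. That is the whole argument for $\binom{A}{j} \subseteq \S_i$, and it is remarkably short once phrased this way.

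Finally, $A \in \S_i$: apply Observation~\ref{obs2} with the $j$ we now have, giving $\binom{A}{k} \subseteq \S_i$ for $k = j, \dots, |A|$, and the top term $k = |A|$ is exactly $A \in \S_i$. The remaining gap is the small-cardinality boundary case $|A| = j+1$ (where I could not find two elements outside a $(j-1)$-set together with $x$): here, since $j \ge 2$, $A$ has at least $3$ elements, pick $C \in \binom{A-x}{j-1}$ and the single element $w = A - x - C$; the $x$-containing $j$-sets of $A$ are $A - w$ and the sets $C + x$, $\dots$ — in fact all $j$-subsets of $A$ except $C + w$ contain $x$ (no: $\binom{A}{j}$ has $|A| = j+1$ members, those missing one element each; exactly one omits $x$, namely $A - x$). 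So only $A - x$ is unaccounted for; but $A - x$ and $(A-x) \triangle$-neighbours... here I would instead directly note $A - x$ has size $j \ge 2$ and pick $u \ne v$ in $A - x$; the sets $A - u$ and $A - v$ both contain $x$, are in $\S_i$, have size $j$, intersect in $j-1$ elements, so (S2) gives $(A-u)\cup(A-v) = A \in \S_i$, and then (S1) gives $\binom{A}{j} \subseteq \S_i$ outright.

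The main obstacle is purely organizational: making sure the cardinality bookkeeping ($|A|$ versus $j$, $j \ge 2$ versus $j \ge 3$) is airtight so that the elements $y_1, y_2$ (or $u, v$) one needs to apply (S2) actually exist, and that the chain of applications of Observations~\ref{obs2} and~\ref{obs3} is invoked with hypotheses genuinely in hand at each step. No deep idea is required beyond (S1), (S2), and the preceding observations; the cleanest writeup likely skips Observation~\ref{obs3} entirely and uses only the ``$x+$ $j$-set gives all $j$-subsets of a $(j+1)$-set'' move.
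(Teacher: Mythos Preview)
Your proposal ultimately lands on the paper's argument, but it is buried under false starts and an unnecessary case split. The clean line is precisely your ``boundary case'' paragraph applied uniformly: for \emph{any} $B'\in\binom{A-x}{j}$ pick $y_1,y_2\in B'$ (possible since $j\ge 2$), set $B_s=B'-y_s+x$; each $B_s$ contains $x$ and hence lies in $\S_i$, so (\textbf{S2}) gives $B'+x=B_1\cup B_2\in\S_i$, and then (\textbf{S1}) (applied to $B'+x$ and, say, $B_1$) gives $\binom{B'+x}{j}\subseteq\S_i$, in particular $B'\in\S_i$. That is the entire proof of $\binom{A}{j}\subseteq\S_i$, with no separate treatment of $|A|=j+1$ needed; Observation~\ref{obs2} then yields $A\in\S_i$.

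Two points to correct in your write-up. First, in your initial attempt the sets $C+x+y_1$ and $C+x+y_2$ have size $j+1$, not $j$, so they are \emph{not} in $\S_i$ ``by hypothesis''. Second, your appeal to Observation~\ref{obs2} to place every $x$-containing $(j{+}1)$-subset of $A$ into $\S_i$ is misapplied: that observation's hypothesis requires \emph{all} $j$-subsets of the ambient set to lie in $\S_i$, not just the $x$-containing ones. What you actually need there is again a direct use of (\textbf{S2}), exactly as above --- and once you see that, the detour through $(j{+}1)$-sets, Observation~\ref{obs3}, and the case split all evaporate.
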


\begin{proof}
We may assume that $|A| \ge j+1.$  Let $B' \in {\binom {A-x}j}.$  Let $\{ y_1, y_2 \} \subseteq B'$ and for $s = 1,2$, let $B_s = B' - y_s + x.$  By assumption, $\{ B_1, B_2 \} \subset \S_i.$ It follows by ({\bf S2}) that $B =B' + x = B_1 \cup B_2 \in \S_i.$  Thus by ({\bf S1}) we have that ${\binom {B}j} \subseteq \S_i$ and hence $B'\in \S_i.$  It now follows that ${\binom Aj} \subseteq \S_i$, and moreover, $A\in \S_i$ (by Observation \ref{obs2}).
\end{proof}

\section{Order-consistent pairs}

Let $S = \{ s_1, s_2, \dots ,s_n \}$ be a set of $n$ elements and let $\S_1 \subseteq 2^S$ and $\S_2 \subseteq 2^S.$  We say that the pair $(\S_1, \S_2)$ is {\bf order-consistent} with respect to $S$ if for any permutation $\pi$ of $[n]$,  there exists $i \in [n]$ for which either
$\{ s_{\pi(1)} , \cdots ,s_{\pi(i)} \} \in \S_1$ or $\{ s_{\pi(i)}, \dots ,s_{\pi(n)} \} \in \S_2.$  Note that if $(\S_1, \S_2)$ is order-consistent, then $(\S_2, \S_1)$ is also order consistent.  To see this, let $\pi$ be a permutation of $[n]$
and let $\pi'$ be the permutation which is the reverse of $\pi$; that is, for all $i\in [n],$ $\pi'(i) = \pi(n-i+1).$  Since $(\S_1, \S_2)$ is order-consistent, there exists $i\in [n]$ such that either $\{ s_{\pi'(1)} , \cdots ,s_{\pi'(i)} \} \in \S_1$ or $\{ s_{\pi'(i)}, \dots ,s_{\pi'(n)} \} \in \S_2.$  Thus either $\{ s_{\pi(n-i+1)}, \dots ,s_{\pi(n)} \} \in \S_1$ or $\{ s_{\pi(1)}, \dots ,s_{\pi(n-i+1)} \} \in \S_2.$ Given that this holds for all permutations $\pi$, it follows that $(\S_2,\S_1)$ is an order-consistent pair.

Let $\Pi$ denote the set of all permutations of $[n]$ and let $\pi \in \Pi.$  
We say that a subset $A \in \S_1$ (resp. $B \in \S_2$) is $\mathbf{\pi}$-{\bf relevant} if there exists $i\in [n]$ such that $A = \{ s_{\pi(1)}, \dots ,s_{\pi(i)} \}$ (resp. $B = \{ s_{\pi(i)}, \dots ,s_{\pi(n)} \}$).  
Let $\Pi' \subseteq \Pi$ be a subset of permutations.  We say that a subset $\A \subseteq \S_1$ (resp. $\B \subseteq \S_2$) is $\mathbf{\Pi}'$-{\bf relevant} if for all $A \in \A$ (resp. $B \in \B$), there exists $\pi \in \Pi'$ such that $A$ (resp. $B$) is $\pi$-relevant.  We say that $(\A,\B)$ is order-consistent relative to $\Pi'$ if for all $\pi \in \Pi',$ either there exists $A\in \A$ for which $A$ is $\pi$-relevant, or there exists $B\in \B$ for which $B$ is $\pi$-relevant. 
% Furthermore, we say for $\A \subseteq \H_1$ and $\B \subseteq \H_2$, the pair $(\A,\B)$ is {\it order-consistent relative to} $\Pi'$ if for all $\pi \in \Pi',$ there exists $i\in [r]$ such that either
%
For $i\in [n],$ we let $\Pi_i$ denote the set of permutations $\pi\in \Pi$ where $\pi(1) = i.$  
The following theorem will be instrumental in the proof of main theorem.

\begin{theorem}
Let $S= \{ s_1, \dots ,s_n \}$ be a set where $n\ge 3$ and let $(\S_1, \S_2)$ be an $S$-pair.  Then $(\S_1, \S_2)$ is order-consistent if and only if
there exists $(A_1, A_2) \in {\binom {n-1}{\S_1}}\times {\binom {n-1}{\S_2}}$, $A_1 \ne A_2$,
and $\{ B_1, B_2 \} \subset {\binom S{n-2}}$ where for $i=1,2$, $B_i \cap A_i = B_1 \cap B_2  \in  {\binom {A_1 \cap A_2}{n-3}}$ and
${\binom{B_i}1} \subset \S_i.$
\label{the1}
\end{theorem}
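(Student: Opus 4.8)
The proof splits along the biconditional, and both directions rest on first normalising the combinatorial data.

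\emph{Normal form of the quadruple.} Whenever $A_1,A_2,B_1,B_2$ as in the statement exist they are rigidly determined: since $|A_i|=n-1$ I can write $A_i=S-a_i$, and $A_1\ne A_2$ gives $a_1\ne a_2$, so $A_1\cap A_2=S-a_1-a_2$; then $B_1\cap B_2\in\binom{A_1\cap A_2}{n-3}$ forces $B_1\cap B_2=S-a_1-a_2-c$ for a third element $c\notin\{a_1,a_2\}$; finally $B_i\cap A_i=B_1\cap B_2$ together with $|B_i|=n-2$ forces the one element of $B_i\setminus(B_1\cap B_2)$ to lie outside $A_i$, i.e. to be $a_i$, so $B_1=S-a_2-c$ and $B_2=S-a_1-c$. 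Hence the existence of the quadruple is \emph{equivalent} to the existence of distinct $a_1,a_2,c\in S$ with $S-a_1\in\S_1$, $S-a_2\in\S_2$, $\binom{S-a_2-c}{1}\subseteq\S_1$ and $\binom{S-a_1-c}{1}\subseteq\S_2$. I would state and prove this normalisation first, since it is used in both directions.

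\emph{The ``if'' direction.} Assume such $a_1,a_2,c$ exist and fix an ordering $s_1\cdots s_n$ of $S$. If $s_1\notin\{a_2,c\}$ then $s_1\in S-a_2-c$, so $\{s_1\}\in\S_1$ is a prefix in $\S_1$. If $s_1=a_2$ then $\{s_2,\dots,s_n\}=S-a_2\in\S_2$ is a suffix in $\S_2$. If $s_1=c$ then $s_n\ne c$; if moreover $s_n\notin\{a_1,c\}$ then $s_n\in S-a_1-c$, so $\{s_n\}\in\S_2$, while if $s_n=a_1$ then $\{s_1,\dots,s_{n-1}\}=S-a_1\in\S_1$. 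These cases are exhaustive, so $(\S_1,\S_2)$ is order consistent; note that this direction uses none of Observations \ref{obs2}--\ref{obs4}, only the four memberships above.

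\emph{The ``only if'' direction: reduction.} Assume $(\S_1,\S_2)$ is order consistent. I would first record the rigidity coming from the $S$-pair axioms alone: by ({\bf S2}) and ({\bf S3}) each $\S_i$ contains at most one $(n-1)$-subset of $S$; the set $D_i:=\{x\in S:\{x\}\notin\S_i\}$ is nonempty (otherwise iterating ({\bf S2}) and then Observation \ref{obs2} puts $S$ into $\S_i$); if $|S\setminus D_i|\ge 2$ then every nonempty subset of $S\setminus D_i$ lies in $\S_i$ (again ({\bf S2}) and Observation \ref{obs2}), so in particular $\S_i$ has an $(n-1)$-set whenever $|D_i|=1$; and by ({\bf S1}) no $2$-element member of $\S_i$ meets both $D_i$ and $S\setminus D_i$. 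Granting the central claim that order consistency forces $\S_1$ to contain a (necessarily unique) $(n-1)$-set $S-a_1$, $\S_2$ to contain $S-a_2$ with $a_1\ne a_2$, and $D_1=\{a_2,c\}$, $D_2=\{a_1,c\}$ for a common $c$, I then set $A_i:=S-a_i$, $B_1:=S\setminus D_1=S-a_2-c$ and $B_2:=S\setminus D_2=S-a_1-c$; these satisfy $B_i\cap A_i=S-a_1-a_2-c=B_1\cap B_2\in\binom{A_1\cap A_2}{n-3}$, and $\binom{B_i}{1}\subseteq\S_i$ by the definition of $D_i$, which is precisely the required conclusion.

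\emph{The central claim, and the main obstacle.} I would prove the central claim by induction on $n$, the case $n=3$ being a finite check (the rigidity above rules out, e.g., $\S_1$ and $\S_2$ both being ``too small''). For the inductive step, choose $x\in D_1$ with $S-x\notin\S_2$ and $D_2\ne\{x\}$ --- such an $x$ exists unless $|D_1|$ is very small, a situation handled directly --- and pass on $S-x$ to the link $\S_1':=\{A-x:x\in A\in\S_1\}$ and the deletion $\S_2':=\{B\in\S_2:x\notin B\}$. One checks that $(\S_1',\S_2')$ is again an $S$-pair on $S-x$, and that it is order consistent: given an ordering of $S-x$, prepend $x$ and invoke order consistency of $(\S_1,\S_2)$; a prefix of the extended ordering lying in $\S_1$ must contain $x$ (as $\{x\}\notin\S_1$) and hence descends to a prefix in $\S_1'$, while any proper suffix lying in $\S_2$ avoids $x$ and hence lies in $\S_2'$. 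The inductive hypothesis then yields the structure on $S-x$, from which $a_1,a_2,c,D_1,D_2$ on $S$ are reconstructed. I expect the real work --- and the main obstacle --- to be twofold: first, verifying that ({\bf S4}) is inherited by $(\S_1',\S_2')$, which is exactly where Observation \ref{obs4} is needed (to promote ``every $j$-set through $x$ is in $\S_1$'' to ``every $j$-set is in $\S_1$''), together with a short argument controlling the top levels of $\S_2$; and second, the degenerate configurations where no clean reduction element $x$ is available (small $|D_1|$, or $\S_2$'s unique $(n-1)$-set forced to be $S-x$), which must be settled by hand through iterated use of ({\bf S1}) and ({\bf S2}) in the spirit of the $r=3$ warm-up of Section 1.
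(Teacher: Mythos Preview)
Your plan shares the paper's inductive skeleton --- pass from $S$ to $S-x$ via the link $\S_1'=\{A-x:x\in A\in\S_1\}$ and the deletion $\S_2'=\{B\in\S_2:x\notin B\}$ --- and your normal form and your proof of sufficiency are both cleaner than the paper's (the paper argues sufficiency by contradiction; your case split on $s_1$ and $s_n$ is direct and transparent).

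There is, however, a genuine gap in your ``only if'' outline. You phrase the main obstacle as ``verifying that ({\bf S4}) is inherited by $(\S_1',\S_2')$'', but in fact ({\bf S4}) is \emph{not} in general inherited: its failure for the reduced pair is exactly the paper's alternative ({\bf a4}), which is treated as a genuine case standing alongside the inductive case ({\bf a5}), not as something one can argue away for a fixed $x$. What one actually proves (the paper's (\ref{nona1.5}), and this is where Observation~\ref{obs4} enters) is that ({\bf a4}) can occur for at most one reduction element, so that together with the easy bound on ({\bf a2})/({\bf a3}) a \emph{counting} argument over all $k$ guarantees some $k$ falls into ({\bf a5}) whenever both $|D_1|$ and $|D_2|$ are large.

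The second and more serious gap is the reconstruction. From a single $x$ with ({\bf a5}) you recover the unique $(n-1)$-set $A_1^x+x\in\S_1$ and, via $\binom{B_2^x}{1}\subseteq\S_2^x\subseteq\S_2$, the bound $|D_2|\le 3$; but you do \emph{not} get an $(n-1)$-set in $\S_2$ (the inductive $A_2^x$ has size only $n-2$), nor singletons in $\S_1$ (the condition $\binom{B_1^x}{1}\subseteq\S_1^x$ only gives $2$-sets through $x$). The paper therefore does not try to lift the full structure from one $x$. Instead it runs the five-way dichotomy over all $k$, uses the counting above to prove first $\min(|D_1|,|D_2|)\le 3$ and then $\le 2$ (claims (\ref{nona2}) and (\ref{nona3})), and the step from $3$ to $2$ already degenerates into explicit checks for $n=4,5$. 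After that, a further case analysis on which of ({\bf a2})--({\bf a5}) hold for the two elements of (say) $D_1$ pins down $a_1,a_2,c$. Your sketch underestimates this half of the argument: choosing $x\in D_1$ is only informative when $|D_1|$ is large, yet the target is precisely $|D_1|=2$, so the ``handled directly'' clause is where essentially all of the work hides.
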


\begin{proof}
To prove sufficiency, suppose $A_i,B_i,\ i = 1,2$ are as described in the theorem.  Note that since $A_1 \ne A_2,$ we have $A_1 \cup A_2 = S.$  Also, since  $B_1 \cap B_2 \subseteq A_1 \cap A_2,$ we have $|B_1 \cap B_2| = n-3 = |A_1 \cap A_2| -1.$ %we have that for $i=1,2,$ $|B_i \cap (A_1 \triangle A_2)| =1$ (where $A_1 \triangle A_2$ denotes the symmetric difference of $A_1$ and $A_2$). 
Now $B_1 \not\subset A_1,$ for otherwise $|B_1 \cap B_2| = |A_1 \cap B_1| = |B_1| = n-2.$  Thus $B_1 \subseteq A_2,$ and likewise, $B_2 \subseteq A_1.$
  For $i=1,2,$ let $\T_i = \{ A_i \} \cup  {\binom {B_i}1}.$ We need only show that $(\T_1, \T_2)$ is order-consistent.  Suppose it is not.  Clearly it is order-consistent relative to the set of permutations $\pi$ for which $s_{\pi(1)} \in B_1$ or $ s_{\pi(n)} \in B_2.$   Let $\pi \in \Pi$ where $ s_{\pi(1)} \not\in B_1$ and $ s_{\pi(n)}  \not\in B_2.$  If $s_{\pi(1)} \not\in A_2,$ then $A_2 = \{ s_{\pi(2)}, \dots , s_{\pi(n)} \}$ and $A_2$ is $\pi$-relevant.  Thus  $A_2 - B_1 = \{ s_{\pi(1)} \} = (A_1 \cap A_2) - B_1.$  By similar reasoning, we also have $A_1-B_2 = \{ s_{\pi(n)} \} = (A_1 \cap A_2) - B_1.$  However, our assumptions imply that $(A_1 \cap A_2) - B_1 = (A_1 \cap A_2) - B_2$, and consequently, $s_{\pi(1)} = s_{\pi(r)}.$  This yields a contradiction.  It follows that $(\T_1, \T_2)$ is order-consistent.
%
 %By our assumptions, it is seen that  $A_2 - B_1 = \{ s_i \},$ for some $i \in [r].$  Note also that $A_1 - A_2 = \{ s_j \},$  and $A_2 - A_1 = \{ s_k \},$ for some distinct $j,k.$ We also see that $s_k \in B_1-B_2$ and $s_j \in B_2 -B_1.$  Thus by assumption, $\pi(1) \ne k$ and $\pi(r) \ne j.$    Then it follows that $\{ s_{\pi(1)}, a_{\pi(r)} \} \subset \in A_1 \cap A_2.$  However, since $|B_1 \cap B_2| = r-3$ and $B_1 \cap B_2 \subset A_1 \cap A_2,$ it must be that $\{  s_{\pi(1)}, a_{\pi(r)} \} \cap B_1 \cap B_2 \ne \emptyset.$  This is impossible. It follows that $(\A_1, \A_2)$ is order consistent.

To prove necessity, we shall  use induction on $n$. It is a straightforward exercise to verify the assertion for $n=3.$  We shall assume that $n \ge 4$ and the assertion is valid to all values less than $n$.  That is, if $|S| < n,$ and $(\S_1, \S_2)$ is an $S$-pair which is order-consistent, then there exist sets $A_i, B_i,\ i =1,2$ as described in the theorem.   Assume now that $S= \{ s_1, \dots ,s_n \}$ and $(\S_1, \S_2)$ is an $S$-pair which is order-consistent. 

 For all $k\in [n],$ let $S^k = S - s_k$ and let $\S_1^k = \{ A - s_k \ \big|\ A \in \S_1 \ \mathrm{and}\ s_k \in A \}$ and $\S_2^k = \{ A \in \S_2 \ \big| \ s_k \notin A \}.$
 %$$\S_i^k = \{ A-s_k \ \big| \ A \in\S_i\ \mathrm{and}\ A\ \mathrm{is} \ \pi-\mathrm{relevant} \ \mathrm{for}\ \mathrm{some}\ \pi \in \Pi_k \}.$$ Note that for all $A\in \S_1^k, \ A + s_k \in \H_1,$ and $\S_2^k \subseteq \S_2.$
 We observe that properties ({\bf S1}) and ({\bf S2}) still hold for the pair $(\S_1^k, \S_2^k)$ whereas ({\bf S3}) and ({\bf S4}) may not. %Given that $(\H_1, \H_2)$ is order consistent with respect to $S$, one of the following must hold:

\begin{noname}
For all $k\in [n],$ one of the following holds:% for $(\H_1^k, \H_2^k)$:
\begin{itemize}
\item[({\bf a1})]  $\{ s_k \} \in \S_1.$
%\item[b)]  $(\H_1', \H_2')$ is stichable wrt $A - a_k.$
\item[({\bf a2})]  $S^k \in \S_2.$
\item[({\bf a3})]  ${\binom {S^k}1} \subseteq \S_2.$
\item[({\bf a4})]  For some $D \in {\binom{S^k}{n-2}},$ and positive integers $i,j$ where $i+j = n-1,$ ${\binom Di} \subseteq \S_1^k$ and ${\binom Dj} \subseteq \S_2^k .$
\item[({\bf a5})]  There exist 
$(A_1^k, A_2^k) \in {\binom {n-2}{\S_1^k}}\times {\binom {n-2}{\S_2^k}},$ $A_1^k \ne A_2^k$,
and $\{ B_1^k, B_2^k \} \subseteq {\binom {S^k}{n-3}}$ where for $i=1,2$, $B_i^k \cap A_i^k = B_1^k \cap B_2^k  \in  {\binom {A_1^k \cap A_2^k}{n-4}}$ and
${\binom{B_i^k}1} \subseteq \S_i^k.$

%$(\H_1^k, \H_2^k)$ is an $S^k$-pair which is order preserving with respect to $S^k.$
\end{itemize}\label{nona1}
\end{noname}

\begin{proof}
Let $k\in [n].$  Assume that none of ({\bf a1}) - ({\bf a4}) hold for $k.$  We will show that ({\bf a5}) must hold for $k.$ Clearly $S^k \not\in \S_1^k,$ for otherwise this would mean that $S \in \S_1$ which is not allowed by ({\bf S3}).  We also have that ${\binom {S^k}1} \not\subseteq \S_1^k$.  For if this was the case, then it would follow that for all $i \in [n] -k,$ 
$\{ s_i, s_k \} \in \S_1.$  It would then follow by Observation \ref{obs4} that $S\in \S_1$ violating ({\bf S3}).
Given that ({\bf a2}) -({\bf a4}) do not hold, $(\S_1^k, \S_2^k)$ is seen to be an $S^k$-pair.  Let $\pi \in \Pi_k$ and let $\pi' = \pi(2)\pi(3) \cdots \pi(n)$.  Since $(\S_1, \S_2)$ is order-consistent, there exists $A\in \S_1$ or $B \in \S_2$ and $i\in [n]$ such that either $A = \{ s_{\pi(1)}, \dots ,s_{\pi(i)} \}$ or
$B = \{ s_{\pi(i)}, \dots ,s_{\pi(n)} \}.$  Given that ({\bf a1}) and ({\bf a2}) do not hold, it follows that in the former case, $i\ge 2,$ $A' = \{ s_{\pi(2)},  \dots ,s_{\pi(i)} \} \in \S_1^k$ and hence $A'$ is $\pi'$-relevant. In the latter case, $i \ge 3$ and $B' = \{ s_{\pi(i)}, \dots ,s_{\pi(n)} \} \in \S_2^k$ and $B'$ is $\pi'$-relevant.  Given that $\pi$ was arbitrarily chosen from $\Pi_k$, we see that $(\S_1^k, \S_2^k)$ is order-consistent with respect to $S^k.$    By the inductive assumption,  there exist 
$(A_1^k, A_2^k) \in {\binom {n-2}{\S_1^k}}\times {\binom {n-2}{\S_2^k}},$ $A_1^k \ne A_2^k,$
and $\{ B_1^k, B_2^k \} \subset {\binom {S^k}{n-3}}$ where for $i=1,2$, $B_i^k \cap A_i^k = B_1^k \cap B_2^k  \in  {\binom {A_1^k \cap A_2^k}{n-4}}$ and
${\binom{B_i^k}1} \subset \S_i^k.$  Thus ({\bf a5}) holds for $k.$
\end{proof}

\begin{noname}
There is at most one integer $k$ for which ({\bf a2}) or ({\bf a3}) holds.  \label{nona1.25}
\end{noname}

\begin{proof}
It suffices to prove that ({\bf a2}) can hold for at most one integer $k$; if ({\bf a3}) holds for some integer $k$, then it follows by Observation \ref{obs2} that $S^k \in \S_2$, and hence ({\bf a2}) holds for $k.$
Suppose to the contrary that ({\bf a2}) holds for distinct integers $k$ and $\ell.$  Then $S^k \in \S_2$ and $S^\ell \in \S_2.$ It then follows by ({\bf S2}) that $S = S^k \cup S^\ell \in \S_2.$  However, this violates ({\bf S3}).  Thus no two such integers can exist.
\end{proof}

\begin{noname}
Property ({\bf a4}) holds for at most one integer $k.$ \label{nona1.5}
\end{noname}

\begin{proof}
Suppose ({\bf a4}) holds for distinct integers $k$ and $\ell.$
Then for some $i,j,i',j'$ where $i+j = n-1$, $i'+j' = n-1$, and subsets $D\in{\binom {S^k}{n-2}}$ and $D' \in {\binom {S^\ell}{n-2}},$ we have ${\binom Di} \subseteq \S_1^k$, ${\binom Dj} \subseteq \S_2^k,$ ${\binom {D'}{i'}} \subseteq \S_1^\ell,$ and ${\binom {D'}{j'}} \subseteq \S_2^\ell .$
By Observation \ref{obs4}, we have that $F_1 = D+ s_k \in \S_1$ and $F_2 = D' + s_\ell \in \S_1.$  If $F_1 \ne F_2,$ then by property ({\bf S2}), $F_1 \cup F_2 = S \in \S_1,$ violating ({\bf S3}).  Thus $F_1 = F_2 = S - s = S'$ for some $s \in S- s_k - s_\ell$ and $S' \in \S_1.$  

Let $i^* = \max \{ i, i' \}$ and $j^* = \min \{ j, j' \}.$  
 We claim that ${\binom {S'}{i^* +1}} \subseteq \S_1$ and ${\binom {S'}{j^*}} \subseteq \S_2.$  To prove the first assertion, we first note that it is true when $i^* = n-2$ since $S'  \in\S_1.$ %it follows that by ({\bf S1}), ${\binom {S'}{n-2}} \subseteq \S_1.$  
 We may assume that $i^* < n-2.$  Then $i^* \le n-3 = |D\cap D'| = |S' -s_k -s_\ell |.$ Suppose first that $i^* = i.$
 Then by assumption, ${\binom {D\cap D'}{i^*}} \subset \S_1^k.$  Thus for all $X \in {\binom {S' - s_k-s_\ell}{i^*}}, \ X + s_k \in \S_1.$  It now follows by Observation \ref{obs4} that ${\binom {S'-s_\ell}{i^*+1}} \subseteq \S_1.$
 Now Observation \ref{obs3} implies that ${\binom {S'}{i^* +1}} \subseteq \S_1.$ Suppose now that $i^* > i.$  Then $i  < n-3$ and it follows by assumption that ${\binom {D\cap D'}{i}} \subseteq \S_1^k.$
 It now follows by Observation \ref{obs4} that ${\binom {D'}{i+1}} \subseteq \S_1.$  Also, since ${\binom {D}{i}} \subseteq \S_1^k,$ we have ${\binom {D}{i+1}} \subseteq \S_1.$
 Let $X \in {\binom {S'}{i+1}}.$  If $X \subseteq D$ or $X \subseteq D',$ then $X \in \S_1.$  Suppose neither occurs.  Then $\{ s_k, x_l \} \subseteq X$ and hence $X - s_k \in {\binom {D}{i}} \subseteq \S_1^k.$  It follows that $X \in \S_1.$  Consequently, ${\binom {S'}{i+1}} \subseteq \S_1.$  Since $i+1 \le i^*+1,$ it follows by Observation \ref{obs2} that ${\binom {S'}{i^*+1}} \subseteq \S_1.$

% 
 %let $T \in {\binom {S'}{i*+1}}.$  Suppose that $s_k \not\in T.$ Then $T \subseteq D.$
 %Let $\{ t, t' \} \subseteq T.$  Then $|T - t| = |T-t'| = i^*$ and hence $T-t \in \S_1^k$ and $T-t' \in \S_1^k.$  It follows $T - t + s_k \in \S_1$ and $T - t' + s_k \in \S_1$.  Thus by ({\bf S2}) that $T + s_k \in \S_1.$  However, given that $T-t + s_k \in \S_1,$ it follows by ({\bf S1}) that $T \in \S_1.$  Similar arguments can be used if $s_\ell \in T.$  Suppose $\{ s_k, s_\ell \} \subseteq T.$  Then given that $|T - s_k| = |T-s_\ell| = i^*,$ we have that $T-s_k \in \S_1^k$ and $T-s_{\ell} \in \S_1^\ell.$  Thus $T \in \S_1.$
 
 To prove that ${\binom {S'}{j^*}} \subseteq \S_2,$ first suppose that $j^* = n-2$.  Then $j^* = j = j' = n-2.$   In this case, $D,D' \in \S_2$ and hence $S' = D \cup D' \in \S_2$ by ({\bf S2}). It would then follow by ({\bf S1}) that ${\binom {S'}{n-2}} \subseteq \S_2.$ Thus we may assume that $j^* < n-2.$ We have that ${\binom {D\cap D'}{j^*}} \subseteq \S_2.$  Given that $D\cap D' = S' - s_k - s_\ell,$ it follows by Observation \ref{obs3} that ${\binom {S'-s_\ell}{j^*}} \subseteq \S_2$ and this in turn implies that ${\binom{S'}{j^*}} \subseteq \S_2.$  
 %Since $F_1 = S' - x_l,$ it follows by Observation \ref{obs3} that ${\binom {S'}{j^*}} \subseteq \S_2.$
 
 %Applying ({\bf S1}) and ({\bf S2}), we see that ${\binom {S'}{j^*}} \subseteq \S_2.$ 
 Given that $i+j = i' + j' =n-1$, it follows that $i^* \le n-1 - j^*,$ and hence $i^* +1 + j^* \le n.$ By application of Observation \ref{obs2}, we have that ${\binom {S'}{n -i^* -1}} \subseteq \S_2$.
However, we now have both ${\binom {S'}{i^*+1}} \subseteq \S_1$ and ${\binom {S'}{n -i^* -1}} \subseteq \S_2$, violating ({\bf S4}).  We conclude that ({\bf a4}) can hold for at most one integer $k$. 
\end{proof}

\begin{noname}
There exists $T \in {\binom S{n-3}}$ such that either ${\binom T1} \subseteq \S_1$ or ${\binom T1} \subseteq \S_2.$ \label{nona2}
\end{noname}

\begin{proof}
Assume that there is no subset $T\in  {\binom S{n-3}}$ such that ${\binom T1} \subseteq \S_1$.  Then there are at least three integers $k$ for which ({\bf a1}) does not hold.   
%Furthermore, ({\bf a3}) does not hold for any integer $k.$ 
By ({\bf\ref{nona1.25}}) and ({\bf \ref{nona1.5}}), ({\bf a2}) or ({\bf a3}) holds for at most one integer $k$ and ({\bf a4}) holds for at most one integer $k.$
Thus there exists $k\in[n]$ such that none of ({\bf a1}) - ({\bf a4}) hold.  By ({\bf \ref{nona1}}), ({\bf a5}) holds for $k.$
Thus there exists $(A_1^k, A_2^k) \in {\binom {n-2}{\S_1^k}}\times {\binom {n-2}{\S_2^k}}$, $A_1^k \ne A_2^k,$
and $\{ B_1^k, B_2^k \} \subset {\binom {S^k}{n-3}}$ where for $i=1,2$, $B_i^k \cap A_i^k = B_1^k \cap B_2^k  \in  {\binom {A_1^k \cap A_2^k}{n-4}}$ and
${\binom{B_i^k}1} \subset \S_i^k.$ 
Thus we see that ${\binom {B_2^k}1} \subseteq \S_2^k \subseteq \S_2.$  This completes the proof.
\end{proof}

\begin{noname}
There exists $T \in {\binom S{n-2}}$ such that either ${\binom T1} \subseteq \S_1$ or ${\binom T1} \subseteq \S_2.$ \label{nona3}
\end{noname}

\begin{proof}
By ({\bf \ref{nona2}}), there exists $T \in {\binom S{n-3}}$ such that either ${\binom T1} \subseteq \S_1$ or ${\binom T1} \subseteq \S_2.$  We claim that it suffices to prove the assertion when ${\binom T1} \subseteq \S_1.$
For if instead ${\binom T1} \subseteq \S_2$, then redefine $\S_i^k$ so that for all $k\in [n]$,  $\S_1^k = \{ A \in \S_1 \ \big| \ s_k \not\in A \}$ and $\S_2^k = \{ A - s_k \ \big| \ A \in \S_2 \ \mathrm{and}\ s_k \in A \}.$  Now it is seen that  ({\bf \ref{nona1}}) - ({\bf \ref{nona1.5}}) still hold when in ({\bf a1}) - ({\bf a5}), we switch $\S_1$ with $\S_2$ and switch $\S_1^k$ with $\S_2^k$.  Now one can use the same proof as in the case when ${\binom T1} \subseteq \S_1$.% that is, with $\S_1$ switched with $\S_2$, and $\S_1^k$ switched with $\S_2^k.$

By the above, we may assume that ${\binom T1}\subseteq \S_1.$  Furthermore, we may assume that $T = \{ s_1, \dots ,s_{n-3} \}.$   Next, we will show that either $\{ s_i \} \in \S_1$ for some $i\in \{ n-2, n-1, n \},$ or ${\binom {S'}1} \subseteq \S_2$ for some $S' \in {\binom {S}{n-2}}.$
We may assume that ({\bf a1}) and ({\bf a3}) do not hold for all $k \in \{ n-2, n-1,n \}.$  Furthermore, by ({\bf \ref{nona1.25}}) and ({\bf \ref{nona1.5}}), ({\bf a2}) holds for at most one integer $k\in \{ n-2,n-1,n \}$ as does ({\bf a4}).   As such, we may assume that ({\bf a2}) and ({\bf a4}) do not hold for $k=n-2.$  Thus by By ({\bf \ref{nona1}}), ({\bf a5}) holds for $k=n-2$.
Thus there exist 
$(A_1^{n-2}, A_2^{n-2}) \in {\binom {n-2}{\S_1^{n-2}}}\times {\binom {n-2}{\S_2^{n-2}}}$, $A_1^{n-2} \ne A_2^{n-2},$
and $\{ B_1^{n-2}, B_2^{n-2} \} \subset {\binom {S^{n-2}}{n-3}}$ where for $i=1,2$, $B_i^{n-2} \cap A_i^{n-2} = B_1^{n-2} \cap B_2^{n-2}  \in  {\binom {A_1^{n-2} \cap A_2^{n-2}}{n-4}}$ and
${\binom{B_i^{n-2}}1} \subset \S_i^{n-2}.$ 

Suppose $s_i \in B_1^{n-2} \cap \{ s_1, \dots ,s_{n-3} \}.$  By assumption, $\{ s_i \} \in \S_1.$  However, given that $s_i \in B_1^{n-2}$, we also have that $\{ s_i \} \in \S_1^{n-2}$ and hence $\{ s_i, s_{n-2} \} \in \S_1.$  By ({\bf S1}), $\{ s_{n-2} \} \in \S_1,$ a contradiction.
Thus  $B_1^{n-2} \cap \{ s_1, \dots ,s_{n-3} \} = \emptyset$ and hence $B_1^{n-2} \subseteq \{ s_{n-1}, s_n \}.$ Consequently, 
$n-3 \le 2$ and hence $n \le 5.$  To complete the proof, we need only consider two cases:

\sms

\noindent {\bf Case 1}:  $n=5$.

\sms

We have $B_1^{n-2} = B_1^3 = \{ s_4, s_5 \}.$  We may assume that $A_2^3 = \{ s_1, s_4, s_5 \},$ where $B_1^3 \cap B_2^3 = \{ s_4 \}.$ Thus $A_1^3 = \{ s_1, s_2, s_4 \}$ and $B_2^3 = \{ s_2, s_4 \}.$  Then $A_1^3 + s_3  = \{ s_1, s_2, s_3, s_4 \} \in \S_1$ and  
%$B_1^3 + s_3 = \{ s_3, s_4, s_5 \} \in \S_1.$ Moreover, 
$A_2^3 \in \S_2.$  Given that ${\binom {B_2^3}1 } \subseteq \S_2,$ we may assume that for all $i\in \{ 1, 3, 4 \},$ $\{ s_i \} \not\in \S_2.$
%and $B_2^3 + s_3 = \{ s_2, s_3, s_4 \} \in \S_1.$ Consequently, by ({\bf S2}), $S - s_1 = \{ s_3, s_4, s_5 \} \cup  \{ s_2, s_3, s_4 \} \in \S_1.$
%We have that $\{ s_1, s_2, s_3, s_4 \} \in \S_1$ and 
%Also, since ${\binom {B_1^3}1} \subset \S_1^3,$ it follows that $\{ s_3, s_4 \}, \{ s_3, s_5 \} \in \S_1.$  
Since by assumption ({\bf a1}) and ({\bf a3}) do not hold for $k\in \{ 3,4,5 \},$ it follows by ({\bf \ref{nona1}}) that for all $k \in \{ 4,5 \},$ one of ({\bf a2}), ({\bf a4}), or ({\bf a5}) must hold.  %Clearly, we may assume that ({\bf a3}) does not hold for $k \in \{ 4,5 \}.$  

Suppose ({\bf a5}) holds for $k=5$.  Then arguing as above, we have that $B_1^5 = \{ s_3, s_4 \}$ and hence $A_1^5 = \{ s_1, s_2, s_3 \}$ or $A_2^5 = \{ s_1, s_2, s_4 \}.$  Thus
either $\{ s_1, s_2, s_3, s_5 \} \in \S_1$ or $\{ s_1, s_2, s_4, s_5 \} \in \S_1$. Given that $\{ s_1, s_2, s_3, s_4 \} \in \S_1,$ it would follow by {\bf (S2)} that $S\in \S_1$, contradicting {\bf (S3)}.  Thus ({\bf a5}) does not hold for $k=5$.
%Furthermore, we also have that  ({\bf a2}) does not hold for $k=5$.  For otherwise,
%we would have both $\{ s_1, s_3, s_4,  s_5 \} \in \S_2$ and $\{ s_1, s_2, s_3,  s_4 \} \in \S_2$, in which case $S\in \S_2$, yielding a contradiction. 
%
%Clearly, we may assume that ({\bf a3}) does not hold for $k=4$ or $k=5$.  Also by ({\bf \ref{nona1.25}}),  ({\bf a2}) can hold for at most one value of $k$.
  
Suppose  ({\bf a4}) holds for $k=5.$ 
%However, given that ({\bf a5}) holds for $k=3,$ it follows by ({\bf \ref{nona1.5}}) 
%Suppose that ({\bf a4}) holds for $k=4.$
%
%Suppose ({\bf a4}) holds for $k=4.$  
Then there exists a subset $D' \in {\binom {S^5}3}$ and integers $i,j$ where $i+j = 4$ such that ${\binom {D'}i} \subseteq \S_1^5$ and ${\binom {D'}j} \subseteq \S_2^5.$  Let $D = D' + s_5.$  By Observation \ref{obs4}, it follows that ${\binom {D}{i+1}} \subseteq \S_1$ and $D \in \S_1.$
Clearly $D \ne \{ s_1, s_2, s_3, s_4 \}$ and hence it follows by property ({\bf S2}) that $D \cup \{ s_1, s_2, s_3, s_4 \} = S \in \S_1,$ yielding a contradiction.  Thus ({\bf a2}) holds for $k=5$ and hence $\{ s_1, s_2, s_3, s_4 \} \in \S_2$.  By ({\bf \ref{nona1}}) and ({\bf \ref{nona1.25}}) it follows that either ({\bf a4}) or ({\bf a5}) holds for $k=4.$  

Suppose ({\bf a5}) holds for $k=4.$  Arguing as before, we see that $B_1^4 = \{ s_3, s_5 \}$ and either $A_1^4 = \{ s_1, s_2, s_3 \}$ or $A_1^4 = \{ s_1, s_2, s_5 \}.$  In the latter case, we have that $\{ s_1, s_2, s_4, s_5 \} \in \S_1$.  It would then follow by ({\bf S2}) that $\{ s_1, s_2, s_3, s_4 \} \cup \{ s_1, s_2, s_4, s_5 \} = S \in \S_1$, contradicting ({\bf S3}).  Thus we have that $A_1^4 = \{ s_1, s_2, s_3 \}$.  It now follows that $\{ s_3 \} = A_1^4 \cap B_1^4 = B_1^4 \cap B_2^4.$  Thus $s_3 \in B_2^4,$ implying that $\{ s_3 \} \in \S_2$, contradicting our assumptions.

By the above, ({\bf a4}) must hold for $k=4.$ Thus there exists a subset $D' \in {\binom {S^4}3}$ and integers $i,j$ where $i+j = 4$ such that ${\binom {D'}i} \subseteq \S_1^4$ and ${\binom {D'}j} \subseteq \S_2^4.$ 
It follows by Observation \ref{obs4} that for $D = D' +s_4,$ ${\binom D{i+1}} \subseteq \S_1$ and $D\in \S_1.$  If $D \ne \{ s_1, s_2, s_3, s_4 \},$ then we would have $D \cup \{ s_1, s_2, s_3, s_4 \} = S \in \S_1,$ contradicting ({\bf S3}). Thus $D = \{ s_1, s_2, s_3, s_4 \}$ and consequently, $D' = \{ s_1, s_2, s_3 \}.$ Given that ${\binom {D'}j} \subseteq \S_2$ and $\{ s_3 \} \not\in \S_2,$ it follows that $j\ge 2.$ 

Suppose $i=1.$  Then ${\binom D2} \subseteq \S_1.$  Given that ${\binom {B_1^3}1} \subseteq \S_1^3,$ it follows that $\{ s_5 \} \in \S_1^3$ and hence $\{ s_3, s_5 \} \in \S_1.$  Thus we have $\{ B \in {\binom S2} \ \big| \ s_3 \in B \} \subseteq \S_1.$  It now follows by Observation \ref{obs4} that $S\in \S_1,$ contradicting ({\bf S3}). Thus $i\ge 2$ and $i=j=2.$ We now have that ${\binom D3} \subseteq \S_1.$ Given that $B_2^3 = \{ s_2, s_4 \} \in \S_2$ and ${\binom {D'}2} \subseteq \S_2,$ it follows $\{ B \in {\binom {D}2} \ \big| \ s_2 \in B \} \subseteq \S_2.$ Thus by Observation \ref{obs4}, we have ${\binom D2} \subseteq \S_2.$ However, we now have both ${\binom D3} \subseteq \S_1$ and ${\binom D2} \subseteq \S_2$, contradicting ({\bf S4}).
This completes the case $n=5.$

\sms

\noindent {\bf Case 2}: $n=4$.

\sms

We may assume that  $B_1^{n-2} = B_1^2 = \{ s_4 \}, \ A_1^2 = \{ s_1, s_3 \}.$ 
There are two possible cases to consider for $A_2^2$ and $B_2^2$:  either $A_2^2 = \{ s_1, s_4 \}$ and $B_2^2 = \{ s_3 \}$ or $A_2^2 = \{ s_3,  s_4 \}$ and $B_2^2 = \{ s_1 \}.$ We shall assume the former -- the latter case can be handled similarly.  
We have that $A_1^2 = \{ s_1, s_3 \}$ and hence $A_1^2 + s_2 = \{ s_1, s_2, s_3 \} \in \S_1$ and $B_1^2 + s_2 = \{ s_2, s_4 \} \in \S_1.$  We also have that $A_2^2 = \{ s_1, s_4 \} \in \S_2$ and $\{ s_3 \} \in \S_2.$
We may assume that ({\bf a1}) and ({\bf a3}) do not hold for $k=3$ or $k=4$.  
%Furthermore, ({\bf \ref{nona1.25}}) implies that ({\bf a2}) holds for at most one of $k=3$ or $k=4.$  Thus we may assume that ({\bf a4}) or ({\bf a5}) holds for $k=3.$  

Suppose ({\bf a5}) holds for $k=3.$  Then $B_1^3 = \{ s_2 \}$ or $B_1^3 = \{ s_4 \}.$  In the former case, we have $A_1^3 = \{ s_1, s_4 \},$ and hence $A_1^3 + s_3 = \{ s_1, s_3, s_4 \} \in \S_1.$  However, since $\{ s_1, s_2, s_3 \} \in \S_1,$ it would follow that $\{s_1, s_3, s_4 \} \cup \{ s_1, s_2, s_3 \} = S \in \S,$ contradicting ({\bf S3}).  Thus $B_1^3 = \{ s_4 \}$ and $A_1^3 = \{ s_1, s_2 \}.$  We have that $B_1^3 + s_3 = \{ s_3, s_4 \} \in \S_1.$
However, given that $\{ s_2, s_4 \} \in \S_1,$ it follows by ({\bf S2}) that $\{ s_3, s_4 \} \cup \{ s_2, s_4 \} = \{ s_2,s_3,s_4 \} \in \S_1.$  Again, since $\{s_1, s_2, s_3 \} \in \S_1,$ it follows that $\{ s_2, s_3, s_4 \} \cup \{ s_1, s_2, s_3 \} = S \in \S_1,$ yielding a contradiction.  We conclude that ({\bf a5}) does not hold for $k=3.$  %Consequently, ({\bf a4}) holds for $k=3.$
By similar arguments, one can also show that ({\bf a5}) does not hold for $k=4$ either.

Suppose ({\bf a4}) holds for $k=4.$  Then there exists a subset $D' \in {\binom {S^4}2}$ and integers $i,j$ where $i+j = 3$ such that ${\binom {D'}i} \subseteq \S_1^4$ and ${\binom {D'}j} \subseteq \S_2^4.$  
We have that $D=D' + s_4 \in \S_1$. Given that $\{ s_1, s_2, s_3 \} \in \S_1,$ it follows by ({\bf S2}) that $S = D \cup \{ s_1, s_2, s_3 \} \in \S_1,$ a contradiction.  Thus ({\bf a4}) does not hold for $k=4$ and hence ({\bf a2}) holds for $k=4.$ Furthermore, since by ({\bf \ref{nona1.25}}), ({\bf a2}) holds for at most one of $k=3$ or $k=4,$ it must be the case that ({\bf a4}) holds for $k=3.$
As such, there exists a subset $D' \in {\binom {S^3}2}$ and integers $i,j$ where $i+j = 3$ such that ${\binom {D'}i} \subseteq \S_1^3$ and ${\binom {D'}j} \subseteq \S_2^3.$  
We have that $D=D' + s_3 \in \S_1$. Given that $\{ s_1, s_2, s_3 \} \in \S_1,$ if $D \ne \{ s_1, s_2, s_3 \},$ then by ({\bf S2}), $S = D \cup \{ s_1, s_2, s_3 \} \in \S_1,$ a contradiction.  Thus we must have that $D = \{ s_1, s_2, s_3 \},$ and thus $D' = \{ s_1, s_2 \}.$
If $j=1,$ then ${\binom {D'}1} \subseteq \S_2^3 \subseteq \S_2$.  Given $|D'| = 2 = n-2,$ the assertion holds in this case.  Thus we may assume that $j=2$ and $i=1.$  However, this means that $\{ s_1 \} \in \S_1^3,$ implying that $\{ s_1, s_3 \} \in \S_1.$  This in turn implies that $\{ s_3 \} \in \S_1$ (since $\{ s_1 \} \in \S_1$) yielding a contradiction.  
This completes the case for $n=4.$
\end{proof}

By ({\bf \ref{nona3}}), there exists $i \in \{ 1,2 \}$ and $T\in {\binom S{n-2}}$ for which ${\binom T1} \subseteq \S_i.$  Using similar reasoning as before, it suffices to prove the case where ${\binom T1} \subseteq \S_1$ (see the first paragraph of the proof of ({\bf \ref{nona3}})).  Thus we may assume ${\binom T1} \subseteq \S_1$ and moreover, $T = \{ s_1, \dots ,s_{n-2} \}$. 

Suppose first that ({\bf a1}) holds for $k=n-1$; that is, $\{ s_{n-1} \} \in \S_1.$  Then ${\binom {S^n}1} \subseteq \S_1$ and (by Observation \ref{obs2}), $S^n \in \S_1.$  We shall show that ({\bf a1}) - ({\bf a5}) do not hold for $k=n,$ violating ({\bf \ref{nona1}}).
Clearly ({\bf a1}) does not hold for $k=n$, for otherwise ({\bf S3}) is violated.  If ({\bf a2}) or ({\bf a3}) holds for $k=n$, then $S^n \in \S_2.$ In this case, ({\bf S4}) is violated.  
Suppose ({\bf a4}) holds for $k=n.$   

Then there exists $D' \in {\binom {S^n}{n-2}}$ and $1\le i \le n-2$ where ${\binom {D'}i} \subseteq \S_1^n$, and $D = D' + s_n \in \S_1.$  However, since $S^n \in \S_1,
$ it follows by ({\bf S2}) that $D \cup S^n = S \in \S_1,$ violating ({\bf S3}).  Thus ({\bf a4}) does not hold for $k=n.$  If ({\bf a5}) holds for $k=n,$ then there is a set $A_1^n \in {\binom {n-2}{\S_1^n}}$, implying that $D = A_1^n + s_n \in \S_1.$  Again, we have $D \cup S^n = S \in \S_1$, a contradiction. This shows that ({\bf a1}) - ({\bf a5}) do not hold for $k=n$ (a contradiction) and hence ({\bf a1}) can not hold for $k=n-1.$  By similar arguments, one can also show that ({\bf a1}) does not hold for $k=n.$
%
%
%
%Thus either ({\bf a4}) or ({\bf a5}) holds for $k=n.$   In either case, there exists $D' \in {\binom {S^n}{n-2}}$ where $D' \in \S_1^n.$  Then $D = D' + s_n \in \S_1.$  Given that ${\binom {D'}1} \subseteq \S_1,$ it follows that $D' \in \S_1.$  Furthermore, Observation \ref{obs3} implies that ${\binom D1} \subseteq \S_1.$ However, this yields a contradiction since $\{ s_n \} \not\in \S_1.$ 
% Moreover, by application of properties
%({\bf S1}) and ({\bf S2}), one can show that $\{ s_n \} \in \S_1,$ contradicting our assumptions. 
%Thus ({\bf a1}) does not hold for $k=n-1$ and the same holds for $k=n.$  Generally, the above argument demonstrates that for all $i\in [2],$ and for all $S' \in {\binom S{n-1}},$ ${\binom {S'}1} \not\subseteq \S_i.$

Suppose ({\bf a2}) holds for $k=n-1$. Then $S^{n-1} = \{ s_1, \dots ,s_{n-2}, s_n \} \in \S_2.$  We will show that ({\bf a4}) holds for $k=n$.  By ({\bf \ref{nona1.25}}), neither ({\bf a2}) nor ({\bf a3}) holds for $k=n.$  Suppose ({\bf a5}) holds for $k=n.$  Following a previous argument, we have that $\{ s_1, \dots ,n-2 \} \cap B_1^n = \emptyset.$  
Thus $B_1^n \subseteq \{ s_{n-1} \}$ and $n\le4.$ Given $n\ge 4,$ it follows that $n=4$ and
%If $n>4,$ then $|B_1^n| = n-3 >1$ and there exists $s_i \in B_1^n \cap \{ s_1, \dots ,s_{n-2} \}$.   In this case, $\{ s_i \} \in \S_1^n,$ and hence $\{ s_i, s_n \} \in \S_1.$  However, since $\{ s_i \} \in \S_1,$ it follows that $\{ s_n \} \in \S_1,$ a contradiction.  Thus $n=4.$ 
%Furthermore, $B_1^n \cap \{ s_1, \dots ,s_{n-2} \} = B_1^4 \cap \{ s_1, s_2 \} = \emptyset.$  
$B_1^4 = \{ s_3 \}$ and $A_1^4 = \{ s_1, s_2 \}.$  Thus $S^3 = \{ s_1, s_2, s_4 \} \in \S_1.$  Since for $i=1,2,$ $\{ s_i \} \in \S_1,$ it follows by Observation \ref{obs3} that ${\binom {S^3}1} \subseteq \S_1.$  However, this implies that $\{ s_4 \} \in \S_1,$ a contradiction.% (because ({\bf a1}) does not hold for $k=n=4$).

It follows from the above that, assuming ({\bf a2}) holds for $k=n-1,$ ({\bf a4}) holds for $k=n.$  Thus there exists $D' \in {\binom {S^n}{n-2}}$ and integers $i,j,\ i+ j = n-1,$ such that ${\binom {D'}i} \subseteq \S_1^n$ and ${\binom {D'}j} \subseteq \S_2^n.$
Then $D = D' + s_n  \in \S_1.$  If $D' = \{ s_1, \dots ,s_{n-2} \},$ then $D' \in \S_1,$ (since ${\binom {D'}1} \subseteq \S_1$). It now follows by Observation \ref{obs3} that ${\binom D1} \subseteq \S_1.$ However, this implies that
$\{ s_n \} \in \S_1,$ a contradiction.
Thus $s_{n-1} \in D'.$  We have ${\binom {D'-s_{n-1}}1} \subseteq \S_1$ and $D' - s_{n-1} \in \S_1$.  Note that $D' \not\in \S_1;$ for otherwise, Observation \ref{obs3} would imply that ${\binom {D'}1} \subseteq \S_1,$ contradicting the fact that $\{ s_{n-1} \} \not\in \S_1.$

Suppose $i \le n-3$. Then ${\binom {D' - s_{n-1}}i} \subseteq \S_1^n.$ Thus for all $S' \in {\binom {D' - s_{n-1}}i},$ $S' \in \S_1$ and $S' + s_n \in \S_1.$  It follows by ({\bf S1}) that ${\binom {S'+s_n}i} \subseteq \S_1.$  This in turn implies that ${\binom {D' - s_{n-1} + s_n}i} \subseteq \S_1.$  By Observation \ref{obs2}, $D' - s_{n-1} + s_n \in \S_1.$  However, we also have that $\{ s_1, \dots ,s_{n-2} \} \in \S_1$ and thus $\{ s_1, \dots ,s_{n-2} \} \cup (D' - s_{n-1} + s_n) = S^{n-1} \in \S_1$.  Given that $S^{n-1} = D' - s_{n-1} + s_n + s_i,$ for some $i\in [n-2],$ it follows by Observation \ref{obs3} that ${\binom {S^{n-1}}i} \subseteq \S_1.$  By Observation \ref{obs2}, we have ${\binom {S^{n-1}}{i+1}} \subseteq \S_1.$ Since ${\binom {D'}j} \subseteq \S_2^n \subseteq \S_2$ and $S^{n-1} \in \S_2$ (since ({\bf a2}) holds for $k=n-1$) and $S^{n-1} - s_i = D',$ for some $i \in [n-2]$, it follows by 
Observation \ref{obs3} that ${\binom {S^{n-1}}j} \subseteq \S_2.$ However, we have ${\binom {S^{n-1}}{i+1}} \subseteq \S_1$ and ${\binom {S^{n-1}}j} \subseteq \S_2$ and $i+1 + j = n,$ in violation of ({\bf S4}). 
 %
%
%
%
 %   However, since $i \le n-3,$ we also see that by Observation \ref{obs2}, ${\binom {D' - s_{n-1}}{i}} \subseteq \S_1^n$ and thus  
%$D'-s_{n-1} \in \S_1^n.$  Consequently, we have $D' - s_{n-1} + s_n \in \S_1.$  Again, since ${\binom {D' - s_{n-1}}1} \subset \S_1,$ it follows by Observation \ref{obs3} that ${\binom {D' - s_{n-1} + s_n}1} \subseteq \S_1,$ implying
%that $\{ s_n \} \in \S_1,$ a contradiction.

From the above, we have $i= n-2$ and $j=1.$  Then $D' \in \S_1^n$ and ${\binom {D'}1} \subseteq \S_2.$ Let $A_1 = D' + s_n, \ A_2 = S^{n-1}, \ B_1 = S - s_{n-1} - s_n,$ and $B_2 = D'.$
Then by the above, $(A_1, A_2) \in {\binom {n-1}{\S_1}} \times {\binom {n-1}{\S_2}}$ and $A_1 \ne A_2.$  Furthermore, we have that for $i=1,2,\ {\binom {B_i}1} \subseteq \S_i.$  We also see that
$B_1 \cap B_2 = D' \cap \{ s_1, \dots ,s_{n-2} \} = A_1 \cap B_1 = A_2 \cap B_2.$ Thus in this case, the theorem is satisfied.

%However, we now see that the theorem is satisfied when $A_1 = D = D' + s_n, A_2 = S^{n-1}, B_1 = S - s_{n-1} - s_n,\ B_2 = D'.$
%
%
To finish the proof, we will show that no other options are possible. Suppose now that ({\bf a2}) does not hold for $k=n-1$, and we may assume the same is true for $k=n$.  Thus  ({\bf a3}) does not hold for $k=n-1$ or $k=n.$  

Suppose ({\bf a4}) holds for $k=n-1.$  
Then there exists $D' \in {\binom {S^{n-1}}{n-2}}$ and integers $i,j,\ i+ j = n-1,$ such that ${\binom {D'}i} \subseteq \S_1^{n-1}$ and ${\binom {D'}j} \subseteq \S_2^{n-1} \subseteq \S_2.$ Then $D = D' +  s_{n-1}  \in \S_1.$
As before, $D' \ne \{ s_1, \dots ,s_{n-2} \}.$ %then $D' \in \S_1,$ and consequently, $s_{n-1} \in \S_1,$ contradicting our assumptions.  
Thus $s_n \in D'$ and we may assume without loss of generality that $D' = \{ s_1, \dots ,s_{n-3}, s_n \}.$
By ({\bf \ref{nona1.5}}), ({\bf a4}) does not hold for $k=n.$  Thus ({\bf a5}) holds for $k=n$ and there exist 
$(A_1^n, A_2^n) \in {\binom {n-2}{\S_1^n}}\times {\binom {n-2}{\S_2^n}},$ $A_1^n \ne A_2^n$,
and $\{ B_1^n, B_2^n \} \subseteq {\binom {S^n}{n-3}}$ where for $i=1,2$, $B_i^n \cap A_i^n = B_1^n \cap B_2^n  \in  {\binom {A_1^n \cap A_2^n}{n-4}}$ and
${\binom{B_i^n}1} \subseteq \S_i^n.$
%If $\{ s_i \} \in \B_1^n,$ for some $i\le n-2,$ then $\{ s_i, s_n \} \in \S_1.$  Since $\{ s_i \} \in \S_1,$ it would follow by ({\bf S1}) that $\{ s_n \} \in \S_1,$ a contradiction.
Arguing as before, we have $B_1^n \cap \{ s_1, \dots ,s_{n-2} \} = \emptyset.$  This in turn implies that $B_1^n = \{ s_{n-1} \}$ and hence $n=4.$  Furthermore, we have that $A_1^n = A_1^4 = \{ s_1, s_2 \},$ implying that $\{ s_1, s_2, s_4 \} \in \S_1.$  However,
we also have that $D = \{ s_1, s_3, s_4 \} \in \S_1.$  It follows by ({\bf S2}) that $S=D \cup \{ s_1,s_2, s_4 \} \in \S_1,$ violating ({\bf S3}). Thus ({\bf a4}) does not hold for  $k=n-1$ and the same holds for $k =n.$

From the above, ({\bf a5}) must hold for both $k=n-1$ and $k=n$. %contradicting ({\bf \ref{nona1.5}}). 
Using similar arguments as above, one can show that $n=4,$ $B_1^3 = \{ s_4 \},$ $A_1^3 = \{ s_1, s_2 \},$ $B_1^4 = \{ s_3 \},$ and $A_1^4 = \{s_1, s_2 \}.$  We have $A_1^3 + s_3 = \{ s_1, s_2, s_3 \} \in \S_1$ and $A_1^4 + s_4 = \{ s_1, s_2, s_4 \} \in \S_1.$  It now follows by ({\bf S2}) that $\{ s_1, s_2, s_3 \} \cup \{ s_1, s_2, s_4 \} = S \in \S_1,$ contradicting ({\bf S3}).
%
%By previous reasoning, we have that $n=4$ and $B_1^{n-1} = B_1^3 = \{ s_4 \},\ B_1^n = B_1^4 = \{ s_3 \}.$  Moreover, $A_1^3 = A_1^4 = \{ s_1, s_2 \}.$
%However, this implies that $\{ s_1,s_2, s_3 \} \in \S_1$ and $\{ s_1, s_2, s_4 \} \in \S_1.$  By ({\bf S1}), $S = \{ s_1,s_2, s_3 \} \cup \{ s_1, s_2, s_4 \} \in \S_1,$ yielding a contradiction.  
This completes the proof of the theorem.
\end{proof}

\section{Proof of Theorem \ref{the-main}}

Let $M$ be a paving matroid where $\gamma(M) = \beta(E(M))$ and $|E(M)| =n.$  %By the previous discussion, we know that $M$ is cyclically orderable if $|E(M)| \le 2r(M) +1.$ 

\subsection{The case $r(M) =2$}

%The following proposition confirms Conjecture \ref{con-Kaj} for the case when $r(M)=2.$
Suppose $r(M)=2.$ We shall prove by induction on $n$ that $M$ is cyclically orderable.
%\begin{proposition}
%If $r(M)=2$, then $M$ is cyclically orderable.\label{pro1.5}
%\end{proposition}
%
%\begin{proof}
%By induction on $n.$  The proposition is seen to be true for 
Theorem \ref{the-main} is seen to be true when $n=2.$  Assume that it is true when $n=m-1\ge 2.$  We shall prove that it is also true for $n=m.$  Assume that $M$ is a paving matroid where $r(M) =2$, $|E(M)| =m$ and $\gamma(M) = \beta(E(M)) = \frac m2.$
For all elements $e\in E(M)$, let $X_e$ denote the parallel class containing $e$ and let $m(e) = |X_e|.$  Then for all $e\in E(M),$ $\beta(X_e) = m(e) \le \gamma(M) = \frac m2.$
%Thus for all $e \in E(M),$ $m(e) \le \frac m2.$
If there are elements $e\in E(M)$ for which $m(e) = \frac m2,$ then choose $f$ to be one such element.  If no such elements exist, then let $f$ be any element in $M.$
%There exists $f\in E(M)$ for which $r(M\backslash f) =2.$  
Let $M' = M\backslash f.$ %We claim that $\gamma(M') = \beta(E(M')) = \frac {m-1}2.$
Suppose there exists $X \subseteq E(M')$ for which $\beta(X) > \frac {m-1}2 = \beta(E(M').$  Then clearly $r(X) =1.$ Thus $X \subseteq X_{g}$ for some $g \in E(M').$  Given that $m(g) \le \frac m2,$ it follows that $X = X_g$ and $m(g) = \frac m2.$  By the choice of $f$, we also have $m(f) = \frac m2.$  Then $E(M) = X_f \cup X_g$  and $E(M) = m = 2\ell$, for some integer $\ell.$  Now let $e_1e_2 \cdots e_m$ be an ordering of $E(M)$ where for all $i$, $e_i \in X_f$, if $i$ is odd, and $e_i\in X_g$, if $i$ is even.  This gives a cyclic ordering for $M$.  Thus we may assume that $\gamma(M') = \beta(E(M')) = \frac {m-1}2.$
%It follows that $|X| =m,$ which is impossible since $|E(M')| = m-1.$  Thus $\gamma(M') = \beta(E(M')).$
By assumption, there is a cyclic ordering for $M'$, say
$e_1e_2 \cdots e_{m-1}.$  Since $m(f) \le \frac m2,$ there exists $i\in [m-1]$ such that $\{ e_i, e_{i+1} \} \cap X_f = \emptyset.$
Consequently, $e_1 \cdots e_i f e_{i+1} \dots e_{m-1}$ is seen to be a cyclic ordering for $M$.  The proof now follows by induction.
%\end{proof}

\subsection{The case where $|E(M)| \le 2r(M)+1$}

Suppose $|E(M)| \le 2r(M) +1.$  As mentioned earlier, if $|E(M)| = 2r(M)+1,$ then $|E(M)|$ and $r(M)$ are relatively prime and hence it follows by Theorem \ref{the-VanTho} that $M$ has a cyclic ordering.
Thus we may assume that $|E(M)| \le 2r(M).$  It now follows by Theorem \ref{the0} that there are bases $A$ and $B$ for which $A \cup B = E(M).$

The following is a well-known conjecture of Gabow \cite{Gab}.% for {\it split matroids}, a class which contains paving matroids.

\begin{conjectureplus}{Gabow}
Suppose that $A$ and $B$ are bases of a matroid $N$ of rank $r.$  Then there are orderings $a_1a_2 \cdots a_r$ and $b_1b_2 \cdots b_r$ of the elements of $A$ and $B$, respectively, such that for $i =1, \dots ,r-1,$
$\{ a_1, \dots ,a_i, b_{i+1}, \dots ,b_r \}$ and $\{ a_{i+1}, \dots ,a_r, b_1 \dots b_i \}$ are bases.\label{con-Gab}
\end{conjectureplus}

We observe that in the special case of Conjecture \ref{con-Gab} where $E(N)$ is the union of two bases, the conjecture implies that $N$ has a cyclic ordering.  
In \cite{BerSch}, the authors verify, among other things, the above conjecture for {\it split matroids}, a class of matroids which includes all paving matroids.
Given that the above conjecture is true for split matroids (and hence also paving matroids) and $E(M) = A \cup B,$ it follows that $M$ has a cyclic ordering.

\subsection{The case where $|E(M)| \ge 2r(M) +2$ and $r(M) \ge 3$.}

In this section, we shall assume that $|E(M)| \ge 2r(M) +2$ and $r(M) \ge 3$.
By Proposition \ref{pro1}, there exists a basis $S$ of $M$ for which $\gamma(M\backslash S) = \beta(E(M)-S)$ and $r(M\backslash S) = r(M).$  Let $r = r(M)$ and let $S = \{ s_1, \dots ,s_r \}.$ Let $M' = M \backslash S$ and let $m = |E(M')| = n-r.$ 
By assumption, $M'$ is cyclically orderable and we will assume that $e_1e_2 \cdots e_m$ is a cyclic ordering. 
Our goal is to show that the cyclic ordering for $M'$ can be extended to a cyclic ordering of $M$.  
To complete the proof of Theorem \ref{the-main}, we need only prove the following:

\begin{proposition}
There exists $i\in [m]$ and a permutation $\pi$ of $[r]$ such that\\ $e_1e_2 \cdots e_i s_{\pi(1)}s_{\pi(2)} \cdots s_{\pi(r)} e_{i+1} \cdots e_m$ is a cyclic ordering of $M.$ 
\label{pro2}
\end{proposition}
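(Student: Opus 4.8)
The plan is to argue by contradiction: assume that for every insertion point $i \in [m]$ and every ordering of $S$, the sequence $e_1 \cdots e_i s_{\pi(1)} \cdots s_{\pi(r)} e_{i+1} \cdots e_m$ fails to be a cyclic ordering of $M$. Since $e_1 \cdots e_m$ is already a cyclic ordering of $M'$ and $S$ is a basis, the only way an insertion can fail is because one of the $r$-windows straddling the inserted block $s_{\pi(1)} \cdots s_{\pi(r)}$ — i.e. a window of the form $\{e_{i-r+k+1},\dots,e_i, s_{\pi(1)},\dots,s_{\pi(k)}\}$ on the left or $\{s_{\pi(k)},\dots,s_{\pi(r)}, e_{i+1},\dots,e_{i+r-k}\}$ on the right — is dependent, hence (since $M$ is paving and these sets have size $r$) a circuit. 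So the failure at position $i$ translates into a statement about which ``prefixes of $S$ together with a suffix of $e_1\cdots e_i$'' and which ``suffixes of $S$ together with a prefix of $e_{i+1}\cdots e_m$'' are circuits. First I would set up, for a fixed insertion gap between $e_i$ and $e_{i+1}$, the two families $\S_1 = \{A \subseteq S : A \cup \{e_{i-|A|+1},\dots,e_i\} \text{ is dependent in } M\}$ (prefixes of $S$ that clash on the left) and $\S_2 = \{B \subseteq S : B \cup \{e_{i+1},\dots,e_{i+|B|}\} \text{ is dependent}\}$ (suffixes of $S$ that clash on the right), and observe that the assumed impossibility of a valid insertion at gap $i$ is exactly the statement that $(\S_1,\S_2)$ is order consistent with respect to $S$.

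Next I would verify that $(\S_1,\S_2)$ is an $\mathbf{S}$-pair, checking (S1)–(S4). Property (S1) and the closure-type properties should follow from the paving hypothesis together with the circuit elimination axiom (much as in the $r=3$ warm-up in the introduction): if $B$ and $B\cup\{x\}$ both give circuits with the corresponding $e$-segments, one manipulates the two circuits to deduce dependence of a whole family of $|B|$-subsets. Property (S3) encodes that a single element $s_j$ cannot clash (it extends to a basis) and that all of $S$ together with a full $e$-segment of length $r$ is not forced to be dependent — this uses that $e_1 \cdots e_m$ is a cyclic ordering of $M'$ and that $S$ is a basis. Property (S4) is the crucial cross-condition: if ${\binom{S-x}{k}} \subseteq \S_1$ and ${\binom{S-x}{r-k}}\subseteq \S_2$ for some $x$, one should be able to derive that the $r$-element $e$-window on one side together with the other is a circuit of $M'$, contradicting the cyclic ordering of $M'$; here the fact that we get to choose among all $m$ gaps, and the interplay between left-clashes and right-clashes at \emph{different} gaps, is what must be exploited.

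Once $(\S_1,\S_2)$ is known to be an order-consistent $\mathbf{S}$-pair, Theorem \ref{the1} hands us $(A_1,A_2) \in {\binom{r-1}{\S_1}}\times{\binom{r-1}{\S_2}}$ with $A_1 \ne A_2$ and $\{B_1,B_2\} \subset {\binom{S}{r-2}}$ satisfying the stated intersection pattern with ${\binom{B_i}{1}} \subset \S_i$. Unpacking the definitions: ${\binom{B_1}{1}}\subset\S_1$ says every singleton from $B_1$ clashes on the left, so $\{s, e_{i}, e_{i-1}, \dots\}$-type sets — actually, since these are singletons, $\{s_j, e_{i-r+2},\dots,e_i\}$ is a circuit for each $s_j \in B_1$ — and similarly for $B_2$ on the right. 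Meanwhile $A_1 \in \S_1$ being an $(r-1)$-set says $A_1$ together with $\{e_{i-r+2},\dots,e_i\}$ (an $(r-1)$-segment) is dependent, i.e. a circuit. I would then do this for \emph{two} consecutive gaps — say the gap after $e_m$ and the gap after $e_1$, exactly as in the $r=3$ illustration — obtaining two such collections of circuits, and derive a contradiction using circuit elimination: the paving structure forces, e.g., $\{e_{m-1},e_m,e_1\}$-analogues or an $r$-element $e$-window to be a circuit of $M'$, or forces too many of $e_1,e_2,\dots$ into a rank-$(r-1)$ flat spanned by $r-2$ elements of $S$, which is impossible since $S$ is a basis.

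I expect the main obstacle to be the bookkeeping in verifying property (S4) and in extracting the final contradiction from Theorem \ref{the1}'s conclusion — specifically, correctly matching the combinatorial objects $A_i, B_i$ back to circuits of $M$ spanning particular $e$-segments, and then choosing the right pair (or small number) of insertion gaps so that circuit elimination produces a forbidden circuit of $M'$ or contradicts $S$ being a basis. The paving hypothesis is what makes circuit elimination clean (every $\ge r$-element dependent set of size exactly $r$ is a circuit, and unions of two circuits minus a common element behave predictably), so I would lean on it heavily; the genuinely delicate point is that a single bad gap is not enough — one must play two gaps against each other, and the argument that ``$e_3 \in \mathrm{cl}(\{s_1,s_2\})$ is impossible'' from the introduction is the prototype of the closing move.
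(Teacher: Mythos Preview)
Your plan follows the paper's approach closely, but there are a few concrete errors and one genuine underestimation of the final step.

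First, your definition of $\S_1$ has the segment length wrong: if $A\subseteq S$ with $|A|=a$, the straddling $r$-window contains $r-a$ elements from the $e$-sequence, not $a$. Thus the correct definition is $\S_1=\{A\subseteq S:\ A\cup\{e_{i-(r-|A|)+1},\dots,e_i\}\in\C(M)\}$. This matters later: when you interpret $A_1\in\S_1$ with $|A_1|=r-1$, the corresponding circuit is $A_1\cup\{e_i\}$ (a \emph{single} $e$), not $A_1$ together with an $(r-1)$-segment as you wrote. Relatedly, your reading of ({\bf S3}) is off: it does not say that no singleton $\{s_j\}$ can clash --- individual singletons certainly can --- but that not \emph{all} of them do (else $S\subseteq\mathrm{cl}(X)$ with $|X|=r-1$). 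Also, ({\bf S4}) is verified entirely at a single gap; the multi-gap interplay you mention there actually belongs to the closing argument, not to the verification of the $S$-pair axioms.

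Second, and more substantively, two consecutive gaps are not always enough. The paper's closing argument splits on $r$. For $r\ge 5$ two gaps $j=1,2$ suffice easily: since $|B_2|=|B_2'|=r-2>r/2$ one finds $s\in B_2\cap B_2'$, giving two circuits $\{s,e_1,\dots,e_{r-1}\}$ and $\{s,e_2,\dots,e_r\}$ whose elimination forces $\{e_1,\dots,e_r\}$ to be a circuit, contradicting the cyclic ordering of $M'$. For $r=3$ one needs the detailed analysis you allude to. For $r=4$, however, two gaps do \emph{not} obviously suffice: the paper uses three consecutive gaps $j=1,2,3$ (and a symmetry argument) to produce enough circuits to conclude that $\{e_{m-1},e_m,e_1,e_2\}\subset\mathrm{cl}(\{s_1,s_2,s_3\})$, contradicting that it is a basis. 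Your plan should anticipate this case split and the need for a third gap when $r=4$.
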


\begin{proof}
Assume to the contrary that for all $i\in[m]$ and for all permutations $\pi$ of $[r],$ $e_1e_2 \cdots e_i s_{\pi(1)}s_{\pi(2)} \cdots s_{\pi(r)} e_{i+1} \cdots e_m$ is not a cyclic ordering of $M.$
For all $j\in [m],$ we shall define a pair $(\H_1^j, \H_2^j)$, where for $i=1,2,$ $\H_i^j \subseteq 2^S.$ 
 Let $x_1^j = e_{j-1}, \ x_2^j = e_{j-2}, \dots ,x_{r-1}^j = e_{j-r+1},$ and let $y_1^j = e_{j}, \ y_{2}^j =e_{j+1}, \dots ,y_{r-1}^j = e_{j+r-2}$ where for all integers $k$, we define $e_k := e_\ell$ where
 $$\ell := \left\{ \begin{array}{lr} k \ mod\ m & \mathrm{if}\ k\ mod\ m \ne 0\\ m  & \mathrm{otherwise.} \end{array} \right. $$ 
 Let $X^j = \{ x_1^j, \dots ,x_{r-1}^j \}$ and $Y^j = \{ y_1^j, \dots ,y_{r-1}^j \}$.  %Note that when it is possible that $X\cap Y \ne \emptyset.$ 

Let $\pi$ be a permutation of $[r]$. By assumption,\\ $e_1 \cdots e_{j-1} s_{\pi(1)}s_{\pi(2)} \cdots s_{\pi(r)} e_j \cdots e_m$ is not a cyclic ordering for $M.$  Then there exists $i \in [r-1]$ such that either $\{ x_1^j, \dots ,x_i^j \} \cup \{ s_{\pi(1)}, \dots ,s_{\pi(r-i)} \}$ is dependent or $\{ y_1^j, \dots ,y_i^j \} \cup \{ s_{\pi(i+1)}, \dots ,s_{\pi(r)} \}$ is dependent.   Since the smallest circuit has size $r$, this means that either $\{ x_1^j, \dots ,x_i^j \} \cup \{ s_{\pi(1)}, \dots ,s_{\pi(r-i)} \}$ or $\{ y_1^j, \dots ,y_i^j \} \cup \{ s_{\pi(i+1)}, \dots ,s_{\pi(r)} \}$ is a circuit.   Let $\C_1^j$ be the set of all $r$-circuits which occur in the former case, and let $\C_2^j$ be the set of all $r$-circuits occurring in the latter case.  That is,
$\C_1^j$ is the set of all $r$-circuits $C$ where for some $i\in [r-1],$ $\{ x_1^j, \dots , x_i^j \} \subset C \subset \{ x_1^j, \dots ,x_i^j\} \cup S$, and $\C_2^j$ is set of all $r$-circuits $C$ where for some $i\in [r-1],$  $\{ y_1^j, \dots ,y_i^j \} \subset C \subseteq \{ y_1^j, \dots ,y_i^j \} \cup S.$  
For $i=1,2,$ let $\H_i^j = \{ C \cap S \ \big| \ C \in \C_i^j \}.$ 

\begin{noname} For all $j,$ the pair $(\H_1^j, \H_2^j)$ is an $S$-pair which is order-consistent. \label{nonapro2} \end{noname}

\begin{proof}
It suffices to prove the assertion for $j=1.$  For convenience, we let $x_i = x_i^1$ $y_i = y_i^1, \  i = 1, \dots ,r-1.$  Furthermore, we let $X = X^1, \ Y= Y^1,$ $\H_1 = \H_1^1$, $\H_2 = \H_2^1,$ $\C_1 = \C_1^1,$ and $\C_2 = \C_2^1.$
It follows from the definition of $(\H_1, \H_2)$ that it is order-consistent.  We need only show that it is an $S$-pair.
Suppose $A,B \in \H_1$ where $|A| = |B|+1$ and $B \subset A.$  Then for some $i \in [r-1],$ $C_1 = A \cup \{ x_1, \dots ,x_i \} \in \C_1$ and $C_2 = B \cup \{ x_1, \dots ,x_{i+1} \} \in \C_1.$  Let $x \in B.$  Then $x\in C_1 \cap C_2$ and hence by the circuit elimination axiom there is a circuit $C \subseteq (C_1 \cup C_2)-x = (A -x) \cup \{ x_1, \dots , x_{i+1} \}.$  Thus $C = (A -x) \cup \{ x_1, \dots , x_{i+1} \}$ and hence $A-x \in \H_1.$  Since this applies to any element $x\in B$, it follows that 
${\binom A{|B|}} \subseteq \H_1.$  The same arguments can be applied to $\H_2$.  Thus ({\bf S1}) holds.

To show that ({\bf S2}) holds, suppose $A,B \in \H_1$ where $|A| = |B|$ and $|A\cap B| = |A|-1.$  There exists $i\in [r]$ such that $C_1 = \{ x_1, \dots ,x_i \} \cup A \in \C_1$ and  $C_2 = \{ x_1, \dots ,x_i \} \cup B \in \C_1.$  By the circuit elimination axiom, there exists a circuit
$C \subseteq (C_1 \cup C_2) - x_i = (A\cup B) \cup \{ x_1, \dots ,x_{i-1} \}$.  Thus $C = (A\cup B) \cup \{ x_1, \dots ,x_{i-1} \}$ is a circuit and hence $A\cup B \in \H_1.$  The same reasoning applies if $A,B \in \H_2.$  Thus ({\bf S2}) holds.

To show that ({\bf S3}) holds, suppose ${\binom S1} \subseteq \H_1.$ Then for $i=1, \dots ,r-1,\  C_i = X \cup \{ s_i \}$ is a circuit, and consequently, $S \subseteq \mathrm{cl}(X).$  However, this is impossible since $|X| = r-1 < r(S) = r.$  Thus  ${\binom S1} \not\subseteq \H_1$ and likewise,  ${\binom S1} \not\subseteq \H_2.$ Also, we clearly have that for $i=1,2,$ $S \not\in \H_i$ since $S$ is a base of $M.$  Thus ({\bf S3}) holds.

Lastly, to show that ({\bf S4}) holds, let $S' = S-s_r.$   Suppose first that ${\binom {S'}{r-1}} \subseteq \H_1$ and ${\binom {S'}1} \subseteq \H_2.$   Then
$S' \in \H_1$ and hence $S' + x_1 \in \C_1.$  Also, for all $i \in [r-1],$ $Y + s_i \in \C_2$.  Thus $x_1 \in \mathrm{cl}(S')$ and $S' \subseteq \mathrm{cl}(Y)$.  Given that $S'$ is independent and $|S'| = |Y| = r-1,$ it follows that $\mathrm{cl}(S') = \mathrm{cl}(Y).$  However, this implies that $Y + x_1 = \{ x_1, y_1, \dots ,y_{r-1} \} = \{ e_m, e_1, \dots ,e_{r-1} \} \subseteq \mathrm{cl}(S'),$ which contradicts the assumption that $\{ e_m, e_1, \dots ,e_{r-1} \}$ is a basis of $M.$ 

Suppose now that for some $k \in [r-2]$, ${\binom {S'}k} \subseteq \H_1$ and ${\binom {S'}{r-k}} \subseteq \H_2$. We claim that $\{ x_1, \dots ,x_{r-k} \} \cup \{ y_1, \dots ,y_k \} \subseteq \mathrm{cl}(S')$.  Following the proof of Observation \ref{obs2}, we have that for $j = k, \dots ,r-1$, ${\binom {S'}j}  \subseteq \H_1.$  In particular, $S' \in \H_1,$ and hence $C_1 = S' +  x_1  \in \C_1.$  This implies that $x_1 \in \mathrm{cl}(S').$  However, seeing as ${\binom {S'}{r-2}} \subseteq \H_1,$ we have that $C_2 = (S' - s_{r-1}) \cup \{ x_1, x_2 \} \in \C_1.$  Given that $x_1 \in \mathrm{cl}(S'),$ it follows that $x_2 \in \mathrm{cl}(S').$  Continuing, we see that $\{ x_1, \dots , x_{r-k} \} \subseteq \mathrm{cl}(S').$  By similar arguments, it can be shown that $\{ y_1, \dots ,y_k \} \subseteq \mathrm{cl}(S').$ Thus proves our claim.  It follows that $r(\{ x_1, \dots ,x_{r-k} \} \cup \{ y_1, \dots ,y_k \} ) \le r-1.$
However, this is impossible since by assumption  $\{ x_1, \dots ,x_{r-k} \} \cup \{ y_1, \dots ,y_k \}$ is a basis.  Thus no such $k$ exists. More generally, the same arguments can be applied to any $j\in [r]$ and $S' = S-s_j.$  Thus ({\bf S4}) holds.
\end{proof}
  
By ({\bf \ref{nonapro2}}), for all $j \in [m],$ $(\H_1^j,\H_2^j)$ is an $S$-pair which is order-consistent.  Thus it follows by Theorem \ref{the1}, that for all $j\in [m],$ there exists $(A_1^j, A_2^j) \in {\binom {r-1}{\H_1^j}}\times {\binom {r-1}{\H_2^j}},$ $A_1^j \ne A_2^j,$
and $\{ B_1^j, B_2^j \} \subseteq {\binom S{r-2}}$ where for $i=1,2$, $B_i^j \cap A_i^j = B_1^j \cap B_2^j  \in  {\binom {A_1^j \cap A_2^j}{r-3}}$ and
${\binom{B_i^j}1} \subseteq \H_i^j.$
%Also, $(\H_1^2, \H_2^2)$ is also an $S$-pair which is order-consistent and hence by Theorem \ref{the1}, there exists $(A_1', A_2') \in {\binom {r-1}{\H_1^2}}\times {\binom {r-1}{\H_2^2}},$ $A_1' \ne A_2',$
%and $\{ B_1', B_2' \} \subseteq {\binom S{r-2}}$ where for $i=1,2$, $B_i' \cap A_i' = B_1' \cap B_2'  \in  {\binom {A_1' \cap A_2'}{r-3}}$ and
%${\binom{B_i'}1} \subseteq \H_i^2.$

Suppose $r>4.$  Given that $|B_1^1| = |B_1^2| = r-2,$ it follows that there exists $s_i \in B_1^1 \cap B_1^2.$  Then $\{ s_i \} \in \H_1^1 \cap \H_1^2$ and consequently, $C_1 = \{ s_i, e_{m-r+2}, \dots ,e_{m} \}$ and $C_2 = \{ s_i, e_{m-r+3}, \dots ,e_m, e_1 \}$ are distinct circuits in $M.$  By the circuit elimination axiom, there exists a circuit
$C \subseteq (C_1 \cup C_2) - s_i = \{ e_{m-r+2}, \dots ,e_m, e_1 \}.$  However, this is impossible since by assumption, $\{ e_{m-r+2}, \dots , e_m, e_1 \}$ is a basis.  Therefore, $r\le 4.$

Suppose $r=3$.  Without loss of generality, we may assume that $A_1^1 = \{ s_1, s_2 \}$, $B_1^1 = \{ s_3 \}$, $A_2^1 = \{ s_2, s_3 \}$, and  $B_2^1= \{ s_1 \}$. Then $\{ s_3, e_m, e_{m-1} \}$ and $\{ s_1, e_1, e_2 \}$ are circuits.
We have that $B_1^2 \ne \{ s_3 \}$ and $B_2^2 \ne \{ s_1 \};$ for if $B_1^2 = \{ s_3 \}$, then $B_1^1 = B_2^2 = \{ s_3 \}$ and it follows that $\{ s_3, e_{m-1}, e_m \}$ and $\{ s_3, e_1, e_m \}$ are circuits, implying that  $\{ e_{m-1}, e_m, e_1 \}$ is a circuit -- a contradiction. Similar reasoning applies if $B_2^2 = \{ s_1 \}.$  Suppose that $B_1^2 = \{ s_1 \} .$  Then $\{ s_1, e_1, e_m \}$ is a circuit.  However, seeing as $\{ s_1, e_1, e_2 \},$ is a circuit (since $B_2^1 = \{ s_1 \}$),  it follows that $\{ s_1, e_1, e_2 \} \cup \{ s_1, e_1, e_m \} - s_1 = \{ e_m, e_1, e_2 \}$ is a circuit, which is false since by assumption $\{ e_m, e_1, e_2 \}$ is a basis.  Thus $B_1^2 \ne \{ s_1 \}.$ 
Given that $B_1^2 \ne \{ s_3 \},$ it follows that 
$B_1^2 = \{ s_2 \}$ and $A_1^2 = \{ s_1, s_3 \}.$  Since $B_2^2 \ne \{ s_1 \},$ it follows that $B_2^2 = \{ s_3 \}$ and $A_2^2 = \{ s_1, s_2 \}.$  Since $A_1^1= A_2^2 = \{ s_1, s_2 \},$ it follows that $\{ s_1, s_2, e_m \}$ and $\{ s_1, s_2, e_2 \}$ are circuits.  Furthermore, since $B_1^2 = \{ s_2 \},$ it follows that $\{ s_2, e_1, e_m \}$ is a circuit.  It is now seen that $\{ e_m, e_1, e_2 \} \subseteq \mathrm{cl}(\{ s_1, s_2 \} ),$ which contradicts the  assumption that  $\{ e_m, e_1, e_2 \}$ is a basis.

Lastly, suppose $r=4$.  Suppose $s_i \in B_1^1 \cap B_1^2.$  Then $\{ s_i, e_{m-2}, e_{m-1}, e_m \}$ and $\{ s_i, e_{m-1}, e_m, e_1 \}$ are circuits and 
hence $\{ e_{m-2}, e_{m-1},e_m, e_1 \}$ is also a circuit, contradicting our assumptions.  Thus $B_1^1 \cap B_1^2 = \emptyset$ and similarly, $B_2^1 \cap B_2^2 = \emptyset.$  More generally, for all $i \in \{ 1,2 \}$ and $j\in [m], \ B_i^j \cap B_i^{j+1} = \emptyset.$ 
Since for all $i \in \{1,2 \}$, $|B_i^1| = |B_i^2| = 2$ it follows that for all $i \in \{ 1,2 \}$, $j\in [m],$ $B_i^j \cup B_i^{j+1} = S.$ Without loss of generality, we may assume $B_1^1 = \{ s_1, s_2 \}$ and $B_1^2 = \{ s_3, s_4 \}.$  
Note that $B_1^1 = \{ s_1, s_2 \}$ means that $\{ s_1, s_2 \} \subset A_2^1$ and so $A_2^1 = \{ s_1, s_2, s_3 \}$ or 
$\{ s_1, s_2, s_4 \}$.  Given that $B_1^1 \not\subseteq A_1^1 \cap A_2^1,$ irregardless of whether $A_2^1$ is the former or latter we have that $A_1^1 = \{ s_1, s_3, s_4 \}$ or $\{ s_2, s_3, s_4 \}$.  However, since the indexing of the elements of $S$ is essentially arbitrary,  one can assume that $A_2^1$ is any one of the first two choices and $A_1^1$ is any one of the latter two choices.
Thus we may assume without loss of generality that  $A_1^1 = \{ s_2, s_3, s_4 \}$ and $A_2^1 = \{ s_1, s_2, s_4 \}.$  Since for all $i \in \{ 1,2 \}$, $j\in [m],$ $B_i^j \cup B_i^{j+1} = S,$ it follows that $B_1^1 = B_1^3 = \cdots = \{ s_1, s_2 \}$ and 
$B_1^2 = B_1^4 = \cdots = \{ s_3, s_4 \}.$  In particular, $m$ must be even.  Corresponding, for $i = 1, 3, \dots $, $A_1^i = \{ s_1, s_3, s_4 \}$ or $\{ s_2, s_3, s_4 \}$ and for $i = 2, 4, \dots $ $A_2^i = \{ s_1, s_2, s_3 \}$ or $\{ s_1, s_2, s_4 \}.$

%We also observe that for $j= 3,5, \dots m-1 $, $A_2^j = \{ s_1, s_2, s_3 \}$ or $A_2^j = \{ s_1,s_2, s_4 \}.$

Given that $B_1^1 = \{ s_1, s_2 \}$ and ${\binom {B_1^1}1} \subseteq \H_1^1,$ it follows that\\ $\{ s_1, e_{m-2}, e_{m-1}, e_m\}$ and $\{ s_2, e_{m-2}, e_{m-1}, e_m\}$ are circuits. Thus $\{ s_1, s_2 \} \subset \mathrm{cl}(\{ e_{m-2}, e_{m-1}, e_m \} ).$  By the above, we have that $B_1^{m} = \{ s_3, s_4 \}$ and either $A_1^{m} = \{ s_1, s_2, s_3 \}$ or $A_1^{m} = \{ s_1, s_2, s_4 \}.$  Suppose the former holds.  Then $\{s_1, s_2, s_3, e_{m-1} \}$ is a circuit.  Consequently, 
$s_3 \in  \mathrm{cl}(\{ e_{m-2}, e_{m-1}, e_m \} ).$  However, since $B_1^2 = \{ s_3, s_4 \} \in \H_1^2,$ it follows that $\{ s_3, e_{m-1}, e_m, e_1 \}$ and $\{ s_4, e_{m-1}, e_m, e_1 \}$  are circuits. By the circuit elimination axiom,\\ $\{ s_3, s_4, e_{m-1}, e_m \}$ is a circuit and hence $s_4 \in \mathrm{cl}(\{ e_{m-2}, e_{m-1}, e_m \} ).$  However, it now follows that $\{ s_1, s_2, s_3, s_4 \} \subset \mathrm{cl}(\{ e_{m-2}, e_{m-1}, e_m \} )$, yielding a contradiction.  If instead, $A_1^{m} = \{ s_1, s_2, s_4 \},$ then similar arguments yield a contradiction. 
This concludes the case for $r=4.$
\end{proof}

\end{document}